\documentclass[11pt,a4,leqno]{amsart}
\usepackage{amsmath,epsfig,graphicx,color,enumerate}
\usepackage{comment}
\usepackage{ascmac}
\usepackage{mathrsfs}
\usepackage{tikz}
\usepackage{subcaption}
\definecolor{darkblue}{rgb}{.2, 0.2,.8}
\definecolor{carageen}{rgb}{0,0.5,0.3}
\definecolor{darkred}{rgb}{1, 0,0}

\newcommand{\clt}{central limit theorem}

\newtheorem{lemma}{Lemma}[section]

\newtheorem{theorem}[lemma]{Theorem}

\renewcommand{\P}{{\mathbb P}}
\newtheorem{proposition}[lemma]{Proposition}
\newtheorem{definition}[lemma]{Definition}
\newtheorem{corollary}[lemma]{Corollary}
\newtheorem{example}[lemma]{Example}
\newtheorem{exercise}[lemma]{Exercise}
\newtheorem{remark}[lemma]{Remark}

\newtheorem{tab}[lemma]{Table}

\newcommand{\bth}{\begin{theorem}}
\newcommand{\ethe}{\end{theorem}}

\newcommand{\bre}{\begin{remark}\em }
\newcommand{\ere}{\end{remark}}

\newcommand{\ble}{\begin{lemma}}
\newcommand{\ele}{\end{lemma}}

\newcommand{\bde}{\begin{definition}}
\newcommand{\ede}{\end{definition}}
\newcommand{\bco}{\begin{corollary}}
\newcommand{\eco}{\end{corollary}}

\newcommand{\bpr}{\begin{proposition}}
\newcommand{\epr}{\end{proposition}}

\newcommand{\bexer}{\begin{exercise}}
\newcommand{\eexer}{\end{exercise}}

\newcommand{\bexam}{\begin{example}\rm }
\newcommand{\eexam}{\end{example}}

\newcommand{\efi}{\end{fig}}

\newcommand{\btab}{\begin{tab}}
\newcommand{\etab}{\end{tab}}

\newcommand{\var}{{\rm var}}

\newcommand{\corr}{{\rm corr}}

\newcommand{\beao}{\begin{eqnarray*}}
\newcommand{\eeao}{\end{eqnarray*}\noindent}

\newcommand{\beam}{\begin{eqnarray}}
\newcommand{\eeam}{\end{eqnarray}\noindent}

\newcommand{\beqq}{\begin{equation}}
\newcommand{\eeqq}{\end{equation}\noindent}

\newcommand{\bce}{\begin{center}}
\newcommand{\ece}{\end{center}}

\newcommand{\barr}{\begin{array}}
\newcommand{\earr}{\end{array}}

\newcommand{\vague}{\stackrel{\lower0.2ex\hbox{$\scriptscriptstyle
                    \it{v} $}}{\rightarrow}}
\newcommand{\weak}{\stackrel{\lower0.2ex\hbox{$\scriptscriptstyle
                    \it{w} $}}{\rightarrow}}
\newcommand{\what}{\stackrel{\lower0.2ex\hbox{$\scriptscriptstyle
                    \it{\hat{w}} $}}{\rightarrow}}

\newcommand{\bdis}{\begin{displaymath}}
\newcommand{\edis}{\end{displaymath}\noindent}

\newcommand{\N}{\mathbb{N}}
\newcommand{\R}{\mathbb{R}}

\newcommand{\ov}{\overline}
\newcommand{\ud}{\underline}
\newcommand{\wt}{\widetilde}
\newcommand{\wh}{\widehat}
\newcommand{\vep}{\varepsilon}

\newcommand{\Z}{{\mathbb Z}}

\newcommand{\seq}{sequence}

\newcommand{\E }{{\mathbb E}}
\renewcommand{\P }{{\mathbb P}}

\allowdisplaybreaks
\makeatletter
\@namedef{subjclassname@2020}{\textup{2020} Mathematics Subject Classification}
\makeatother

\begin{document}
\bibliographystyle{plain}
\title[Improvement of central limit theorem for stationary $\rho$-mixing sequences]
{Can we further improve the central limit theorem for stationary $\rho$-mixing sequences $?$}
\author[M. Matsui]{Muneya Matsui}
\thanks{Muneya Matsui's research is partly supported by the JSPS Grant-in-Aid for Scientific Research C
(26K14736).} 
\address{Graduate School of Economics, The university of Osaka, 1-7, Machikaneyama, Toyonaka, Osaka 560-0043, Japan.}
\email{mmuneya@gmail.com}
\today

\begin{abstract}
Through the seminal works of Peligrad and Bradley, the existing sufficient conditions for the central limit theorem for 
$\rho$-mixing sequences are known to be very close to necessary. Nevertheless, we show that further improvements to these conditions 
are achievable when the $\rho$-mixing coefficients are non-summable, bridging a long-standing gap in the borderline case. 
The key idea is an iterative block-size technique that enables precise evaluations of the slowly varying variance components. 
Finally, known examples and new extensions--including iterated logarithmic mixing rates--are investigated 
to demonstrate the sharpness of our criteria, revealing a clear parameter structure and posing related open problems. 
\end{abstract}
\keywords{Strictly stationary \seq, \clt, $\rho$-mixing, Slowly varying \seq} 
\subjclass[2020]{Primary 60F05; Secondary 60E07, 60G70}
\maketitle

\section{Introduction} 
Throughout this paper, we use the following notation. 
Let $\log x$ denote $\log_2(x)$ (base $2$) and $\log^+x = \log (x\vee 1)$. 
For two sequences $(a_n)$ and $(b_n)$, we write $a_n \sim b_n$ as $n\to \infty$ if $\lim_{n\to\infty} a_n/b_n=1$. 
The notation $a_n \ll b_n$ means $a_n = O(b_n)$. The integer part of a real number $y>0$ is denoted by $[y]$.
Let $X:=(X_t)_{t\in \Z}$ be a strictly stationary sequence of random variables defined on a probability space $(\Omega,\mathcal{F},\P)$. 
Denote the partial sum of $X$ by $S_n=\sum_{i=1}^n X_i$ and its variance by $\sigma_n^2 = \var (S_n)$. 
For $-\infty \le m\le n\le \infty$, let $\mathcal{F}^n_m$ denote the $\sigma$-field generated by $(X_t)_{m\le t\le n}$. 
The dependence coefficient is defined by  
\[
 \rho(n) = \sup_{f\in L_2(\mathcal{F}^0_{-\infty}), \, g\in L_2(\mathcal{F}^{\infty}_n)} |\corr(f,g)|.
\]
A stationary sequence $X$ is said to be ``$\rho$-mixing'' if $\rho(n)\to 0$ as $n\to\infty$.
Furthermore, $N(0,1)$ denote the standard normal distribution.  

We start with the following landmark central limit theorem (CLT) for $\rho$-mixing sequences established by Ibragimov \cite{ibragimov:1975}.

\begin{theorem}[Theorems 1 and 2 in \cite{ibragimov:1975}]
\label{thm:Ibragimov}
 Suppose that $(X_t)$ is a strictly stationary sequence of random variables such that 
\begin{align}
\begin{split}\label{condition:Ibragimov}
\E X_0 =0,&\quad \E X_0^2 <\infty,\quad \sigma_n\to\infty\quad \text{as}\quad n\to\infty \\ 
\text{and}&\quad \rho(n)\to 0\quad \text{as}\quad n\to\infty.
\end{split}
\end{align}
Then $\sigma_n^2=nh(n)$, where $h(n)$ is a slowly varying function as $n\to\infty$. Suppose, in addition, that at least one of the following two conditions is satisfied:
\begin{align*}
(\mathrm{i})  & \hspace{2cm} \E |X_0|^{2+\delta} <\infty\quad \text{for some}\quad \delta>0,\\ 
(\mathrm{ii}) & \hspace{2cm} \sum_{i=1}^\infty \rho (2^i)<\infty.  
\end{align*}
Then $S_n/\sigma_n\stackrel{d}{\to} N(0,1)$ as $n\to\infty$. 
\end{theorem}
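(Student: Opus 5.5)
The plan has three stages: show that $\sigma_n^2$ is regularly varying with index one, then run Bernstein's big-block/small-block method, and finally verify Lindeberg's condition for the blocks, using either (i) or (ii).

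\textbf{Regular variation of $\sigma_n^2$.} Write $S_{m+n}=S_m+(S_{m+n}-S_m)$ and insert a gap of length $k$ between the two pieces. By the definition of $\rho$,
\[
|\mathrm{cov}(S_m, S_{m+n+k}-S_{m+k})|\le \rho(k)\sigma_m\sigma_n .
\]
Moreover, $\var(S_k)\le Ck\,\E X_0^2$ is negligible compared with $\sigma_n^2$ once $\sigma_n\to\infty$ and $k=o(n)$; a standard argument gives $\sigma_n^2\ge c\,n/\text{(slowly varying)}$. Combining these,
\[
\sigma_{m+n}^2=\sigma_m^2+\sigma_n^2+O\bigl(\rho(k)\sigma_m\sigma_n+\text{gap terms}\bigr).
\]
Since $\rho(k)\to0$, this quasi-additivity yields $\sigma_{2n}^2/\sigma_n^2\to 2$ and, more generally, $\sigma_{[tn]}^2/\sigma_n^2\to t$. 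By Karamata-type arguments for sequences this gives $\sigma_n^2=nh(n)$ with $h$ slowly varying. The only delicate point is excluding oscillation. I would handle it by first proving that $\sigma_n^2$ is nearly monotone and that $\sigma_{kn}^2/(k\sigma_n^2)\to1$ uniformly for fixed $k$.

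\textbf{Bernstein blocking.} Choose $p=p_n$, $q=q_n$ with $q=o(p)$, $p=o(n)$, and $k=[n/(p+q)]$. Split $S_n$ into big blocks $Y_j$ of length $p$ separated by small blocks of length $q$. The small blocks and the remainder contribute $o(\sigma_n^2)$ in $L_2$; this uses the regular variation just established and the fact that $q/p\to0$. For the big blocks, the characteristic function of $\sum_j Y_j$ differs from $\prod_j \E e^{itY_j}$ by at most $Ck\rho(q)$, by iterating the $\rho$-mixing covariance bound on bounded functions. So $q$ must be chosen with $k\rho(q)\to0$, which is possible since $\rho(q)\to 0$ and $p$ can be taken large. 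Also $\var(\sum_j Y_j)\sim k\sigma_p^2\sim\sigma_n^2$. It then suffices to verify the Lindeberg condition for independent copies of the $Y_j$:
\[
\frac{k}{\sigma_n^2}\E\bigl[Y_1^2\mathbf 1\{|Y_1|>\varepsilon\sigma_n\}\bigr]\to0 .
\]

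\textbf{Lindeberg step — the main obstacle.} Under (i), I would use the moment inequality for $\rho$-mixing sequences (Ibragimov/Peligrad type),
\[
\E|S_p|^{2+\delta'}\le C\sigma_p^{2+\delta'},
\]
valid for some $0<\delta'\le\delta$. It is proved by the same dyadic splitting, $S_{2m}=S_m+S_m'+\text{gap}$, with Minkowski and Rosenthal-type recursions. The constant stays bounded because $\rho(k)\to0$ and $\sigma^2$ is regularly varying, which allows a small $\delta'$. Lyapunov's condition then gives
\[
\frac{k\,\sigma_p^{2+\delta'}}{\sigma_n^{2+\delta'}}\sim k^{-\delta'/2}\to0 .
\]
Under (ii), only second moments are available. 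Here I would instead show that $S_p^2/\sigma_p^2$ is uniformly integrable, again via dyadic decomposition, now truncating $X_i$ at level $\sim\varepsilon\sigma_p/\sqrt{p}$ type thresholds. Each dyadic level incurs an error factor $(1+C\rho(2^i))$, and summability of $\rho(2^i)$ keeps the product $\prod_i(1+C\rho(2^i))$ finite. This yields $\E|S_p|^4\ll\sigma_p^4$ for the truncated sums, while the truncation remainder is controlled in $L_2$ by $\E[X_0^2\mathbf 1\{|X_0|>M\}]\to0$. Uniform integrability then gives Lindeberg. Balancing truncation against $k\rho(q)\to0$ is where I expect most of the difficulty.
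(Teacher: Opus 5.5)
The paper does not prove this theorem. It is quoted from Ibragimov and used only as an input: the proofs of Lemmas \ref{lemm:upper:bound} and \ref{lem:lower:bound} both start from $\sigma_n^2=nh(n)$. The nearest argument in the paper is the proof of Theorem \ref{thm:main1}. That proof does no blocking; it invokes the known equivalence, under \eqref{condition:Ibragimov}, between the CLT and uniform integrability of $(S_n^2/\sigma_n^2)$, and for the summable case it refers to Peligrad (1982). Uniform integrability is then proved by truncation: the tail part is handled in $L_2$ with the upper and lower variance bounds, and the bounded part in $L_{2+\delta}$ with Lemma \ref{lem:inq:4th:moment}. Your Bernstein step is a self-contained proof of ``uniform integrability $\Rightarrow$ CLT'', and your Lindeberg step is a proof of uniform integrability, so your architecture is the classical one and is sound. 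Under (i), your moment bound is exactly what Lemma \ref{lem:inq:4th:moment} with Remark \ref{rem:for:lem:inq:4th:moment} gives. The exponential factor there is $n^{o(1)}$, so $n^{1+o(1)}\E|X_0|^{2+\delta'}=o(\sigma_n^{2+\delta'})$ with $\delta'=\min(\delta,2)$.

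Two of your justifications are wrong as stated, though both can be repaired.

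(a) The bound $\var(S_k)\le Ck\E X_0^2$ does not hold in general without (ii); one only has $\sigma_k^2\le k\,\ov h(k)$ with $\ov h$ slowly varying and possibly unbounded. The lower bound $\sigma_n^2\ge cn/(\text{slowly varying})$ is derived in the paper (Lemma \ref{lem:lower:bound}) from $\sigma_n^2=nh(n)$, so it cannot be fed into the proof of regular variation. Neither is needed. Since $\sigma_n\to\infty$, you can choose gaps $k_n\to\infty$ so slowly that $\max_{j\le k_n}\sigma_j=o(\sigma_n)$. Then $\|S_{m+n}-S_m-S_n(m+k)\|_2\le 2\sigma_k$ and $|\mathrm{cov}(S_m,S_n(m+k))|\le\rho(k)\sigma_m\sigma_n$ give $\sigma_{m+n}^2=(\sigma_m^2+\sigma_n^2)(1+o(1))$ as $\min(m,n)\to\infty$. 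Together with $|\sigma_{n+r}-\sigma_n|\le r\|X_0\|_2$, this yields near-monotonicity and $\sigma_{[tn]}^2/\sigma_n^2\to t$.

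(b) In case (ii), a Minkowski/covariance fourth-moment recursion produces per-level factors $1+C\rho^{1/2}(\ell)$, not $1+C\rho(\ell)$. The mixed terms $\E|U|^3|V|$ are controlled through $\rho^{2/p}$ with $p=4$, as in Lemma \ref{lem:inq:4th:moment}. Moreover $\sum_i\rho^{1/2}(2^i)$ can diverge under (ii), e.g. if $\rho(2^i)\asymp i^{-3/2}$. You do not need that product to be finite. Truncate at a fixed level $M$, not at $\vep\sigma_p/\sqrt p$: that level is bounded under (ii), so the remainder would not vanish. The product is only $n^{o(1)}$, so $\E|S_{n,1}|^4\le C(n^{1+o(1)}M^4+\sigma_{n,1}^4)\ll\sigma_n^4$.

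Summability is genuinely used in the $L_2$ step instead. It gives $\var(\sum_{t\le n}X_{t,2})\le Cn\E[X_0^2\mathbf{1}\{|X_0|>M\}]$ and $\sigma_n^2\ge cn$ with $C,c$ free of $n$ and $M$, i.e. bounded products $\prod_i(1\pm\rho(2^i))$. This is exactly the ratio $\ov h/\ud h$ that the paper's function $g$ absorbs when $\sum_i\rho(2^i)=\infty$.

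Finally, there is no real tension between truncation and $k\rho(q)\to0$. Uniform integrability concerns the single sequence $(S_p^2/\sigma_p^2)$, and Lindeberg for the blocks follows because $\vep\sigma_n/\sigma_p\asymp\vep k^{1/2}\to\infty$.
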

Bradley \cite{bradley:1980} showed that conditions $(\mathrm{i})$ and $(\mathrm{ii})$ cannot be omitted altogether. 
He later conjectured in \cite{bradley:1987} that a suitable combination of modified versions of $(\mathrm{i})$ and 
$(\mathrm{ii})$ would improve the CLT of Theorem \ref{thm:Ibragimov}. Peligrad \cite{peligrad:1987} elegantly resolved 
this conjecture, obtaining the following theorem. Let $g:[0,\infty) \to [0,\infty)$ be a non-decreasing function. 

\begin{theorem}[Theorem 1 in Peligrad \cite{peligrad:1987}]
\label{peligrad:1987:clt:thm}
 Suppose that $(X_t)$ is a strictly stationary sequence satisfying \eqref{condition:Ibragimov} and 
\begin{align}
& \E X_0^2 g(|X_0|) <\infty, \label{condi:i:modified:1}\\
& g(n^{1/2}) \gg \exp\Big( 2\sum_{i=1}^{[\log n]}\rho (2^i)/(1-\eta) \Big)\quad \text{for some}\quad 0<\eta<1. \label{condi:ii:modified:1}
\end{align}
Then $S_n/\sigma_n\stackrel{d}{\to} N(0,1)$ as $n\to\infty$. 
\end{theorem}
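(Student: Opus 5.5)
The plan is to use the classical Bernstein big-block/small-block method combined with truncation at level proportional to $\sigma_n$. By Theorem \ref{thm:Ibragimov}, $\sigma_n^2=nh(n)$ with $h$ slowly varying. The relevant variance estimates come from Ibragimov's recursion. Splitting $S_{2m}$ into two halves and using the definition of $\rho$, one gets
\[
\sigma_{2m}^2\le 2\sigma_m^2(1+\rho(0\vee\cdot))\approx 2\sigma_m^2(1+\rho(m)).
\]
Iterating over dyadic scales gives $\sigma_{2^k}^2\le C\,2^k\exp\bigl(\sum_{i\le k}\rho(2^i)\bigr)$. A matching lower bound is available with a gap of small blocks, which controls $h(n)/h(m)$ for $m\le n$ by $\exp\bigl(c\sum_{i=\log m}^{\log n}\rho(2^i)\bigr)$. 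I would also use the standard moment bound for $\rho$-mixing sums of bounded variables, namely Peligrad's/Ibragimov's estimate
\[
\E|S_n|^{q}\ll n^{q/2}\exp\Bigl(C\sum_{i\le\log n}\rho(2^i)\Bigr)\|X_0\|_\infty^{q-2}+\dots
\]
with the constant in the exponent tuned via the factor $1/(1-\eta)$.

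\textbf{Step 1 (truncation).} Set $b_n=\varepsilon\sigma_n$, or rather the level $n^{1/2}$ adjusted by $h$. Write
\[
X_i=\bar X_i+\tilde X_i,\qquad \bar X_i=X_i\mathbf 1\{|X_i|\le b_n\}-\E(\cdot).
\]
Bound $\var(\sum\tilde X_i)$ by the dyadic variance inequality applied to the stationary sequence $(\tilde X_i)$, which has the same $\rho$. This gives
\[
\var\Bigl(\sum_{i\le n}\tilde X_i\Bigr)\le C\,n\,\E X_0^2\mathbf 1\{|X_0|>b_n\}\exp\Bigl(2\sum_{i\le\log n}\rho(2^i)/(1-\eta)\Bigr).
\]
By \eqref{condi:i:modified:1}, $\E X_0^2\mathbf 1\{|X_0|>b_n\}\le \E X_0^2g(|X_0|)/g(b_n)=o(1/g(b_n))$. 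By \eqref{condi:ii:modified:1}, the exponential is $\ll g(n^{1/2})$. Since $\sigma_n^2=nh(n)$ and $h$ is slowly varying, this is $o(\sigma_n^2)$ provided $b_n\ge n^{1/2}$ up to slowly varying factors and $h$ does not tend to zero too fast. The fact that $\sigma_n\to\infty$, together with the lower dyadic estimate, should give $h(n)\gg\exp(-c\sum\rho(2^i))$, and I expect this to be absorbed. The role of $\eta$ is to leave room for this factor.

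\textbf{Step 2 (blocking).} For the bounded part, split $[1,n]$ into $k_n$ big blocks of length $p$ separated by small blocks of length $q$, with $q/p\to0$ and $k_n\to\infty$ slowly. The small blocks contribute $o(\sigma_n^2)$ by the variance bound and slow variation. Independence of the big blocks is approximated through $\rho(q)\to0$ via the characteristic-function coupling inequality
\[
\Bigl|\E\prod e^{itY_j}-\prod\E e^{itY_j}\Bigr|\le 2k_n\rho(q)
\]
(up to Ibragimov's refinement). One then chooses $k_n\rho(q)\to0$.

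\textbf{Step 3 (Lindeberg).} For the independent copies, verify the Lindeberg condition using a fourth-moment bound on $\sum_{\text{block}}\bar X_i$. This fourth moment is $\ll p^2h(p)^2+p\,b_n^2h(p)\cdot(\text{exp factor})$, which after division by $\sigma_n^4$ and summation over $k_n$ blocks tends to zero, provided $b_n^2$ times the exponential factor is $o(\sigma_n^2)$ scale appropriately.

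I expect the main obstacle to be reconciling Step 1 and Step 3. The truncation level must be large enough that the tail variance, inflated by $\exp(2\sum\rho(2^i)/(1-\eta))$, is negligible, yet small enough that the higher-moment term in Step 3, inflated by the same factor, vanishes. Here the precise constant $2/(1-\eta)$ arises from the sharp form of the dyadic moment inequality, namely Peligrad's bound $\E S_n^2\le n\E X_0^2\exp(2\sum\rho(2^i))$ type with a $(1-\eta)$ loss in passing to $L_2$ norms of truncated variables. I would first try to get the $2+$-moment inequality with exactly this exponent, and choose $b_n=\varepsilon_n n^{1/2}$ with $\varepsilon_n\to0$ slowly.
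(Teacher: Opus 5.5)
Your architecture (truncate, control the tail part in $L^2$ by the dyadic variance bounds, control the truncated part by a moment bound of order $>2$) is the same as the paper's argument, which is the proof of Theorem \ref{thm:main1}, following Peligrad. Explicit Bernstein blocking plus Lyapunov is a legitimate, if longer, substitute for the paper's shortcut. Under \eqref{condition:Ibragimov} the CLT is equivalent to uniform integrability of $(S_n^2/\sigma_n^2)$, so it suffices to show $\sigma_{n,2}=o(\sigma_n)$ and $\sup_n\E|S_{n,1}/\sigma_n|^{2+\delta}<\infty$. That shortcut also spares you the block bookkeeping for a truncated sequence that depends on $n$.

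The gap is the quantitative core. You flag it as ``the main obstacle'' but leave it open, and your account of the constant $2/(1-\eta)$ makes Step 1 fail as written. The upper bound of Lemma \ref{lem:1:peligrad:1987}, applied to the tail variables, carries only $\exp\bigl(\sum_{i\le[(1-\vep_1)\log n]}\rho(2^i)/(1-\vep_1)\bigr)$, with coefficient about $1$, not $2$. The lower bound gives $\sigma_n^2\ge\ov c_2\,n\exp\bigl(-\sum_{i\le[(1-\vep_1)\log n]}\rho(2^i)/(1-\vep_2)\bigr)$. Hence
\[
\frac{\sigma_{n,2}^2}{\sigma_n^2}\le C\,\E\bigl[X_0^2\mathbf 1\{|X_0|>T\}\bigr]\exp\Bigl(\frac{2}{1-\vep_2}\sum_{i\le[(1-\vep_1)\log n]}\rho(2^i)\Bigr).
\]
So the $2$ in \eqref{condi:ii:modified:1} is ``upper exponent plus lower exponent''. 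As you wrote it, Step 1 needs $g(b_n)\gg\exp\bigl(2\sum\rho(2^i)/(1-\eta)\bigr)/h(n)$. The hypothesis does not provide this once $h(n)\to0$, and the lower bound does not exclude $h(n)\to0$. The $\eta$-slack cannot absorb $h(n)^{-1}$, whose only available bound is $c\exp\bigl(\sum\rho(2^i)/(1-\vep_2)\bigr)$. This bites exactly when $\sum_i\rho(2^i)=\infty$, the only case of interest.

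Step 3 needs a lemma you do not supply: $\E|S_n|^{2+\delta}\le C\bigl(n\,L(n)\,\E|X_0|^{2+\delta}+\sigma_n^{2+\delta}\bigr)$, with a loss $L(n)$ of size at most $n^{\vep}$ or slowly varying. The first is Peligrad's Lemma 3 with $\delta=2$; the second is $L=h_\delta$, via Lemma \ref{lem:inq:4th:moment} and Remark \ref{rem:for:lem:inq:4th:moment}. The shape $n^{q/2}\|X_0\|_\infty^{q-2}$ in your preamble is useless here.

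Since $\E|X_{0,1}|^{2+\delta}\le(2T)^{\delta}\E X_0^2$, the lemma requires $T^{\delta}L(n)=O\bigl(n^{\delta/2}h(n)^{1+\delta/2}\bigr)$. So $\varepsilon_n=T/n^{1/2}$ is not free to go to zero ``slowly'':
\begin{itemize}
\item with Peligrad's Lemma 3 it must be polynomial, $T=n^{(1-\vep_1)/2}$ with $\vep<\vep_1$;
\item with Lemma \ref{lem:inq:4th:moment} it must be at most $\bigl(L(n)h(n)^{-1-\delta/2}\bigr)^{-1/\delta}$; compare the argument of $g$ in \eqref{condi:ii:improved}.
\end{itemize}
In both cases $T\ge n^{(1-\vep_1)/2}$ for large $n$.

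Because \eqref{condi:ii:modified:1} is stated only at $n^{1/2}$, you must then push it down to $T$. Monotonicity of $g$ gives $g(T)\ge g\bigl(n^{(1-\vep_1)/2}\bigr)\gg\exp\bigl(\frac{2}{1-\eta}\sum_{i\le[(1-\vep_1)\log n]}\rho(2^i)\bigr)$, which beats the factor in the display once $\vep_2\le\eta$. Choosing $\vep<\vep_1<\vep_2\le\eta$ is exactly the reconciliation you left open, and it is what $\eta$ pays for. The paper avoids guessing $T$ by defining it as $g^{\rm inv}$ of the required factor; there $T=g^{\rm inv}(h_d(n))$. This makes the tail estimate automatic and leaves only the check that $T$ is small enough for the moment bound.
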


It is easy to see that $\exp\big(c\sum_{i=1}^{[\log n]}\rho(2^i)\big)$ is a slowly varying function as $n\to\infty$ 
for any $c\in \R$, meaning that $g$ in Theorem \ref{peligrad:1987:clt:thm} can be chosen to be a slowly varying function. 
Note that Theorem \ref{peligrad:1987:clt:thm} implies Theorem \ref{thm:Ibragimov}: when $g$ is a constant function, 
condition $(\mathrm{ii})$ is satisfied, whereas when $g(x)=x^\delta$ for some $\delta>0$, condition $(\mathrm{i})$ follows. 
Furthermore, condition \eqref{condi:ii:modified:1} accommodates the case where $\sum_{i=1}^\infty \rho(2^i)=\infty$ 
at the expense of an extra slowly varying term $g(|X_0|)$ multiplied by $X^2_0$ in \eqref{condi:i:modified:1}. 
In this case, if we take $g$ to be a slowly varying function, neither $(\mathrm{i})$ nor $(\mathrm{ii})$ 
of Theorem \ref{thm:Ibragimov} holds. From Theorem \ref{peligrad:1987:clt:thm}, one can deduce the following results:

\begin{corollary}[Corollaries 1 and 2 in \cite{peligrad:1987}]
\label{peligrad:1987:clt:cor12}
 Let $(X_t)$ be a strictly stationary sequence satisfying \eqref{condition:Ibragimov}. Consider the following conditions: \\
$(a)$ Assume that for some $0<\vep<1$ and $\alpha>0$, 
\begin{align*}
(a1) \qquad & \E X_0^2 (\log^+ |X_0|)^{2\alpha/(1-\vep)} <\infty, \\
(a2) \qquad & \rho(n) \le \alpha (\log n)^{-1} \quad \text{for every $n$ sufficiently large.}
\end{align*}
$(b)$ Assume that for some $0<\beta <1$, $0<\vep<1$, and $\alpha >0$, 
\begin{align*}
 (b1) \qquad & \E X_0^2 \exp\Big( \frac{2\alpha}{1-\beta}\frac{1}{1-\vep} (2\log^+|X_0|)^{1-\beta} \Big) <\infty, \\ 
 (b2) \qquad & \rho(n) \le \alpha (\log n)^{-\beta} \quad \text{for every $n$ sufficiently large.} 
\end{align*}
If either $(a)$ or $(b)$ is satisfied, then $S_n/\sigma_n\stackrel{d}{\to} N(0,1)$ as $n\to\infty$. 
\end{corollary}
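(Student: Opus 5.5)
The plan is to derive both parts directly from Theorem \ref{peligrad:1987:clt:thm}. In each case I would exhibit a non-decreasing $g$ for which \eqref{condi:i:modified:1} is exactly the moment hypothesis $(a1)$ or $(b1)$, and then check \eqref{condi:ii:modified:1} from the rate $(a2)$ or $(b2)$. The free parameter $\eta$ of the theorem will be taken equal to $\vep$. Throughout, $\log$ is base $2$, and there is a constant $C$, depending on the finitely many indices $i$ with $2^i$ below the threshold where $(a2)$ or $(b2)$ starts to hold, such that
\[
\sum_{i=1}^{[\log n]}\rho(2^i)\le C+\alpha\sum_{i=1}^{[\log n]} i^{-\beta},
\]
since $\log 2^i=i$. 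Here $\beta=1$ corresponds to case $(a)$.

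For case $(a)$, I would take $g(x)=(\log^+ x)^{2\alpha/(1-\vep)}$, suitably adjusted near $0$ so that it is positive and non-decreasing; taking $g=\max\{1,(\log^+x)^{2\alpha/(1-\vep)}\}$ suffices, because $\E X_0^2<\infty$. Then \eqref{condi:i:modified:1} is $(a1)$. For the rate, $\sum_{i\le [\log n]} i^{-1}\le \ln\log n+1$, where $\ln$ is the natural logarithm, so
\[
\exp\Big(\frac{2}{1-\vep}\sum_{i=1}^{[\log n]}\rho(2^i)\Big)\ll \exp\Big(\frac{2\alpha}{1-\vep}\ln \log n\Big)=(\log n)^{2\alpha/(1-\vep)}.
\]
On the other hand, $g(n^{1/2})=(\tfrac12\log n)^{2\alpha/(1-\vep)}$, which is a constant multiple of the right-hand side. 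This gives \eqref{condi:ii:modified:1} with $\eta=\vep$.

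For case $(b)$, I would take $g(x)=\exp\big(\frac{2\alpha}{(1-\beta)(1-\vep)}(2\log^+x)^{1-\beta}\big)$, which is non-decreasing and at least $1$. Then \eqref{condi:i:modified:1} is $(b1)$. The step to watch is the comparison of $\sum_{i\le m} i^{-\beta}$ with $m^{1-\beta}/(1-\beta)$, where $m=[\log n]$. The integral bound gives
\[
\sum_{i=1}^m i^{-\beta}\le 1+\int_1^m x^{-\beta}\,dx\le 1+\frac{m^{1-\beta}}{1-\beta}.
\]
Hence the exponential of the rate sum is $\ll \exp\big(\frac{2\alpha}{(1-\beta)(1-\vep)}(\log n)^{1-\beta}\big)$. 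Meanwhile $g(n^{1/2})$ has exponent $\frac{2\alpha}{(1-\beta)(1-\vep)}(2\cdot\tfrac12\log n)^{1-\beta}$, which is exactly the same. The factor $2$ inside $(2\log^+|X_0|)$ is there precisely to cancel the square root in $g(n^{1/2})$. So \eqref{condi:ii:modified:1} holds with $\eta=\vep$, and the additive constants only affect the implied constant in $\gg$.

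The only real obstacle is the bookkeeping. The constants must match exactly, with no spare multiplicative slack in the exponent, because $\eta=\vep$ is used up precisely. The floor $[\log n]$ and the initial terms where the rate bound fails each contribute only bounded additive terms inside the exponential. Those become multiplicative constants, which is allowed by the $\gg$ in \eqref{condi:ii:modified:1}. With this checked, Theorem \ref{peligrad:1987:clt:thm} yields $S_n/\sigma_n\stackrel{d}{\to}N(0,1)$ in both cases.
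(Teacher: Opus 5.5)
Your proposal is correct and follows essentially the paper's route. The paper gives no separate proof of this cited corollary; it only states that it follows from Theorem \ref{peligrad:1987:clt:thm}. Your choice of $g$ to match $(a1)$ or $(b1)$ with $\eta=\vep$, together with the harmonic and integral bounds on $\sum_{i\le[\log n]} i^{-\beta}$ (where the factor $2$ in $(2\log^+|X_0|)$ cancels the square root in $g(n^{1/2})$), is exactly that deduction. It is also the same template the paper uses for its own Corollary \ref{cor:clt:log:mixing}$(b)$.
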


As noted in Remarks 1 and 2 of \cite{peligrad:1987} (see also Corollaries 1 and 2 in \cite{bradley:1987}), 
the conditions in Corollary \ref{peligrad:1987:clt:cor12} are remarkably sharp and almost necessary. 
Thus, there appeared to be little room for further refinement. Nevertheless, we assert that a genuine 
improvement is indeed achievable. Specifically, we show that the parameter $\vep$ in condition $(a1)$, 
as well as that in condition $(b1)$ provided $\beta > 1/2$, can be completely removed (see Corollary \ref{cor:clt:log:mixing}). 
Furthermore, our framework extends seamlessly to even slower mixing regimes, such as iterated logarithmic decay rates 
(see Corollary \ref{cor:clt:new:log:mixing}), where it reveals a remarkably sharp correspondence between 
the required moment conditions and the decay rates of the mixing coefficients.

The key idea lies in a refined asymptotic analysis of the slowly varying component 
$h(n)$ in the variance representation $\sigma_n^2 = n h(n)$ from Theorem \ref{thm:Ibragimov}. 
In Peligrad \cite{peligrad:1987}, the extra factor $\vep>0$ in Corollary \ref{peligrad:1987:clt:cor12} 
inherently arises from the crude geometric index truncation $2^{(1-\vep_1)i}$ used in the upper and lower 
bounds for $h(n)$ (see Lemma \ref{lem:1:peligrad:1987}). To eliminate this artifact, we replace $2^{-\vep_1 i}$ 
with a delicate sequence of auxiliary slowly varying functions. 
Crucially, these auxiliary functions incorporate the slowly varying bounds of $h(u)$ originally derived in \cite{peligrad:1987}, 
thereby establishing a self-improving iterative scheme for $h(n)$. 
By integrating these novel components into our evaluation mechanism, 
we significantly sharpen the analytical framework of \cite{peligrad:1987}, enabling us to handle delicate borderline behavior that was previously inaccessible.

We conclude this section with a brief overview of the related literature on $\rho$-mixing sequences. 
Invariance principles in this framework were studied by Peligrad \cite{peligrad:1982} and Shao \cite{shao:1988,shao:1989}. 
Meanwhile, the CLT under infinite variance was established by Bradley \cite{bradley:1988} and subsequently extended to 
an invariance principle by Shao \cite{shao:1993}.

The remainder of this paper is organized as follows. In Section \ref{sec:main:results}, 
we state our main results and compare them with the preceding literature. The proofs are provided 
in Section \ref{sec:proofs}, while auxiliary results are deferred to Appendix \ref{appendix}.

\section{Main results}  
\label{sec:main:results}
For the reader's convenience we begin by explaining the core idea penetrating this paper. 
We take up Lemma 1 of \cite{peligrad:1987} and explain the idea.
\begin{lemma}
\label{lem:1:peligrad:1987}
 Suppose that $(X_t)_{t\in \Z}$ satisfies \eqref{condition:Ibragimov}. Let $0<\vep_1 < \vep_2<1$. Then there exist two positive 
constants $\ov c_1= \ov c_1 (\rho,\vep_1)$ and $\ov c_2=\ov c_2(X,(\vep_i))$ such that for every $n\ge 1$
\begin{align}
 \sigma_n^2 & \le \ov c_1 n \E X_0^2 \exp\bigg(
\sum_{i=1}^{[(1-\vep_1) \log n]} \rho(2^i)/(1-\vep_1)
\bigg) =: n \ov h (n), \\
 \sigma_n^2 & \ge \ov c_2 n \exp \bigg(
-\sum_{i=1}^{[(1-\vep_1) \log n]} \rho(2^i)/(1-\vep_2)
\bigg) =: n\ud h(n). 
\end{align}
\end{lemma}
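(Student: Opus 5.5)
The plan is the classical dyadic blocking argument, comparing $\sigma_{2n}^2$ with $\sigma_n^2$ through the correlation between adjacent blocks, and then controlling how this comparison depends on gaps. For the upper bound I split $S_{2m}=S_m+(S_{2m}-S_m)$ and insert a gap of length $k$ between the two blocks. By stationarity and the definition of $\rho$,
\[
\sigma_{2m}^2 \le 2\sigma_m^2\bigl(1+\rho(k)\bigr) + C k\,\sigma_m \sqrt{\E X_0^2} + C k^2 \E X_0^2,
\]
because removing $k$ terms changes $\|S_m\|_2$ by at most $k\|X_0\|_2$. The natural choice is $k=k_i=[2^{(1-\vep_1)i}]$ at the step $m=2^i$. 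The gap is then negligible relative to the block, but it still has length $2^{(1-\vep_1)i}$, so the correlation factor is $\rho(2^{(1-\vep_1)i})$.

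Writing $a_i=\sigma_{2^i}^2/2^i$, this gives $a_{i+1}\le a_i(1+\rho(k_i))+\text{error}_i$. Iterating and using $1+x\le e^x$ produces $\exp\bigl(\sum_{j\le i}\rho(2^{(1-\vep_1)j})\bigr)$. Reindexing $l=[(1-\vep_1)j]$ and using that $\rho$ is nonincreasing, each $l$ is hit by at most about $1/(1-\vep_1)$ values of $j$. This yields $\sum_{l\le[(1-\vep_1)i]}\rho(2^l)/(1-\vep_1)$ plus a constant depending on $\rho,\vep_1$, which is absorbed into $\ov c_1$.

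The additive errors $C k_i\sigma_{2^i}\|X_0\|_2/2^i$ are relative errors of order $2^{-\vep_1 i/2}$ times the current bound, after using the crude bound $\sigma_m^2\le C m^{1+\vep_1}$. That crude bound holds since $\sigma_m^2=mh(m)$ with $h$ slowly varying, by Theorem~\ref{thm:Ibragimov}. These errors are therefore summable in the exponent. Passing from $n=2^i$ to general $n$ is done by a dyadic decomposition of $n$, together with the same stationarity and Minkowski bound.

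For the lower bound I run the same recursion with the reverse inequality
\[
\sigma_{2m}^2 \ge 2\sigma_m^2\bigl(1-\rho(k)\bigr) - \text{errors}.
\]
Here the errors must be compared to $\sigma_m^2$ from below, which is where $\sigma_n\to\infty$ and a constant $\ov c_2$ depending on $X$ enter. The product $\prod(1-\rho(k_j))$ is bounded below by $\exp\bigl(-\sum\rho(k_j)/(1-\eta)\bigr)$ once $\rho(k_j)\le\eta$, i.e. for $j$ large. This is what forces the second parameter $\vep_2>\vep_1$, via $(1-\eta)(1-\vep_1)\ge 1-\vep_2$. The finitely many early indices go into $\ov c_2$.

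The main obstacle is the lower-bound error control. The additive gap error $k_i\sigma_m\|X_0\|_2$ must be small relative to $\sigma_m^2$, which needs a lower bound $\sigma_m^2\ge m^{1-\delta}$. I would get this from the slow variation of $h$, giving $h(m)\ge m^{-\delta}$ eventually, and choose $\delta<\vep_1$. The relative errors are then $O(2^{-(\vep_1-\delta)i/2})$, which is summable, so $\log(1-x)\ge -x/(1-\eta)$ absorbs them as well.
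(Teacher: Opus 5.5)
\textbf{There is a genuine gap in your error control, and it is exactly where the content of the lemma lies.}

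You bound the cost of the gap by $k\|X_0\|_2$ and take $k=k_i=[2^{(1-\vep_1)i}]$ at $m=2^i$. The two additive terms in your recursion then have relative size
\[
\frac{k\,\sigma_m\|X_0\|_2}{\sigma_m^2}\asymp \frac{2^{(1-\vep_1)i}}{2^{i/2}h^{1/2}(2^i)},\qquad \frac{k^2\,\E X_0^2}{\sigma_m^2}\asymp \frac{2^{(1-2\vep_1)i}}{h(2^i)} .
\]
Up to slowly varying factors these are $2^{(1/2-\vep_1)i}$ and $2^{(1-2\vep_1)i}$. Both diverge for every $\vep_1<1/2$, so they are not $O(2^{-\vep_1 i/2})$ as you claim. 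With your crude bound $\sigma_m^2\le Cm^{1+\vep_1}$, the quantity $k_i\sigma_{2^i}\|X_0\|_2/2^i$ is only bounded by $C2^{(1-\vep_1)i/2}$.

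The same slip occurs in the lower bound. There $k\|X_0\|_2/\sigma_m\le C\,k\,m^{-(1-\delta)/2}\asymp 2^{(1/2-\vep_1+\delta/2)i}$, not $2^{-(\vep_1-\delta)i/2}$.

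You cannot retreat to $\vep_1>1/2$. The lemma is claimed for all $\vep_1\in(0,1)$, and small $\vep_1$ is both the stronger and the relevant case.

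The rate $2^{-\vep_1 i/2}=(k/m)^{1/2}$ that you quote is what you get when the removed block is measured by its true norm. By stationarity the removed pieces are partial sums of $k$ consecutive terms, so the gap costs $2\sigma_k$, as in \eqref{ineq:sigma:rho:1}, not $2k\|X_0\|_2$. This is the estimate the paper's dyadic argument (Lemma \ref{lemm:upper:bound}) is built on. It requires a growth bound $\sigma_k\lesssim k^{(1+\vep)/2}\|X_0\|_2$ with $\vep$ small relative to $\vep_1$.

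\textbf{The source of that growth bound also matters, because of the constants.} If you take it (or your $\sigma_m^2\le Cm^{1+\vep_1}$) from the slow variation of $h$, the constant depends on the law of $X$. The lemma asserts $\ov c_1=\ov c_1(\rho,\vep_1)$. This uniformity is needed because the upper bound is later applied to truncated sequences, whose laws change with $n$ but whose $\rho$-coefficients are dominated by those of $X$.

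Both issues can be repaired as follows.
\begin{enumerate}
\item Run $\sigma_{2m}\le 2^{1/2}(1+\rho(p_0))^{1/2}\sigma_m+2\sigma_{p_0}$ with a \emph{fixed} gap $p_0=p_0(\rho,\vep)$ chosen so that $1+\rho(p_0)\le 2^{\vep}$. Here the crude bound $\sigma_{p_0}\le p_0\|X_0\|_2$ is harmless, and it yields $\sigma_k^2\le C(\rho,\vep)\,k^{1+\vep}\E X_0^2$ for all $k$.
\item Set $b_i=\sigma(2^i)/(2^{i/2}\|X_0\|_2)$ and use your gap $k_i$. Then
\[
b_{i+1}\le(1+\rho(k_i))^{1/2}b_i+C\,2^{-(\vep_1-\vep(1-\vep_1))i/2}.
\]
The additive errors are summable once $\vep<\vep_1/(1-\vep_1)$.
\item Divide by $\prod_{j\le i}(1+\rho(k_j))^{1/2}\ge 1$. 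This gives the upper bound with a constant depending only on $(\rho,\vep_1)$.
\item The lower bound then goes through as you planned, using $\sigma_m\ge c\,m^{(1-\delta)/2}$ (an $X$-dependent constant is allowed there). Indeed $\sigma_{k_i}/\sigma_{2^i}\le C2^{-(\vep_1-\vep(1-\vep_1)-\delta)i/2}$, which is summable for small $\vep,\delta$.
\end{enumerate}

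Finally, your reindexing claim that each $l$ is hit by at most about $1/(1-\vep_1)$ values of $j$ is literally false: the count can be $\lceil 1/(1-\vep_1)\rceil=2$. Use an Abel-summation or integral comparison based on the monotonicity of $\rho$ instead. That gives the factor $1/(1-\vep_1)$ up to an additive constant in the exponent.
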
 
Notice that the exponential parts of $\ov h,\ud h$ control the slowly varying part of $\sigma_n^2$ (cf. $h(x)$ in Theorem \ref{thm:Ibragimov}) and 
$\ov h,\ud h$ themselves are slowly varying. Indeed, the constant $\eta$ in Theorem \ref{peligrad:1987:clt:thm}, 
so that $\vep$ in Corollary \ref{peligrad:1987:clt:cor12} partly originates from $(\vep_i)$\footnote{
The constant $\eta$ depends also on $\vep$ in Lemma 3 of \cite{peligrad:1987} where 
the upper bound for the $4$th moment $\E|S_n|^4 \le C\big(n^{1+\vep}+\sigma_n^4\big)$ is deduced. 
By more accurate analysis we replace $n^{\vep}$ with slowly varying functions of 
$n$. The method is similar to that done for improvement of $\ov h,\ud h$ in Lemma \ref{lem:1:peligrad:1987}.  
}.
Therefore, in order to remove $\vep$ in Corollary \ref{peligrad:1987:clt:cor12}, we need more accurate analysis 
of the parts related with $(\vep_i)$. More precisely we focus on $2^{-\vep_1 i}$ terms in 
the following equivalent expressions to $\ov h,\ud h$: 
\begin{align}
\label{exp:ovh:udh:respective}
 \ov h(n)  = \wh c_1 \E X_0^2 \exp\bigg(
\sum_{i=1}^{[\log n]} \rho([2^{(1-\vep_1)i}]) 
\bigg), \quad 
 \ud h(n)  = \wh c_2 \exp \bigg(
-\sum_{i=1}^{[\log n]} \rho([2^{(1-\vep_1)i}])/(1-\beta)
\bigg), 
\end{align}
where $(1-\vep_1)(1-\beta)=(1-\vep_2)$, and replace $2^{-\vep_1 i}$ with more accurate 
slowly varying functions of $i$. 

Therefore, to state our main results, we need several slowly varying functions of $n$, which
 inevitably become complex for our purpose. 
Let $q_0$ 
be an arbitrary non-increasing slowly varying function such that 
$\sum_{i=1}^\infty q_0^{1/2}(2^i)< \infty$ holds. 
Our main theorem relies on the following two non-decreasing slowly varying functions,  
\begin{align*}
 h_\delta (n) = \exp\Big(5 \sum_{i=1}^{[\log n]} \rho^{2/(2+\delta)}(2^{i})\Big),\quad \delta\in(0,1),\quad n\ge 1
\end{align*}
and 
\begin{align*}
 h_d (n) = \exp\Big(- 2\sum_{i=k_0}^{[\log n]} \log \big\{ 1-
\rho\big(2_\ast^i \big) \big\}
\Big), \quad n\ge 2^{k_0} 
\end{align*}
where 
$k_0$ is an integer, 
and $\ud \ell$ is  
a non-increasing slowly varying function given 
by $\ud \ell(n)= \ud h^4(n) q_0(n)$, and $2_\ast^i=[2^i \ud \ell(2^i)]$ (see Lemma \ref{lem:lower:bound}).  

We remark that $h_d$ is a sharper lower bound than $\ud h$.
\begin{theorem}
\label{thm:main1}
Let $\delta$ be a positive constant in $(0,1)$. 
 Suppose that $(X_t)_{t\in \Z}$ is a strictly stationary sequence satisfying \eqref{condition:Ibragimov} and 
\begin{align*}
 \E X_0^2 g(|X_0|)<\infty
\end{align*}
and 
\begin{align}
\label{condi:ii:improved}
 g\Big(n^{1/2} \big(h_\delta(n)\, h_d^{(2+\delta)/4}(n) \big)^{-1/\delta}\Big) \gg h_d(n). 
\end{align}
Then $S_n/\sigma_n\stackrel{d}{\to} N(0,1)$ as $n\to\infty$.
\end{theorem}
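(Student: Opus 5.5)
We follow the architecture of Peligrad's proof of Theorem \ref{peligrad:1987:clt:thm}: a truncation argument combined with a Bernstein big-block/small-block decomposition and a Lindeberg-type verification, with the crude $\vep$-dependent bounds replaced by sharper slowly varying ones. Fix a truncation level $b_n$ and write $X_t = X_t\mathbf 1\{|X_t|\le b_n\} - \E[\cdot] + \big(X_t\mathbf 1\{|X_t|> b_n\}-\E[\cdot]\big) =: Y_{t,n}+Z_{t,n}$. The natural choice, read off from \eqref{condi:ii:improved}, is
\[
b_n = n^{1/2}\big(h_\delta(n)\,h_d^{(2+\delta)/4}(n)\big)^{-1/\delta}.
\]

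\textbf{Step 1: sharpened variance bounds.} Use the refined two-sided bounds on $\sigma_n^2$ (Lemma \ref{lem:lower:bound} and its upper counterpart), with the lower bound in the form $\sigma_n^2\gg n\,h_d^{-1}(n)$. Here $h_d$ arises from iterating the block decomposition with block sizes $2^i\ud\ell(2^i)$ instead of $2^{(1-\vep_1)i}$, which is how $\log(1-\rho(2^i_\ast))$ enters. For the truncated tail part, the $\rho$-mixing variance inequality (Ibragimov / Peligrad Lemma 1 type, applied to $Z$) gives
\[
\var\Big(\sum_{t\le n} Z_{t,n}\Big)\ll n\,h(n)\,\E X_0^2\mathbf 1\{|X_0|>b_n\}\le n\,h(n)\,\frac{\E X_0^2 g(|X_0|)\mathbf 1\{|X_0|>b_n\}}{g(b_n)}.
\]
Comparing this with $\sigma_n^2\gg n h_d^{-1}(n)$ and invoking \eqref{condi:ii:improved}, in the form $g(b_n)\gg h_d(n)$, shows that the tail part is $o(\sigma_n)$ in $L^2$. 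The remaining point is to check that the upper slowly varying factor is absorbed; I expect this to need $g(b_n)/h_d(n)\to\infty$ in the sense of $\gg$ together with the vanishing tail expectation. Hence it suffices to prove the CLT for $\sum Y_{t,n}/\sigma_n$, and the variances of the two sums are asymptotically equal.

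\textbf{Step 2: a fourth-moment bound without $n^\vep$.} For the bounded variables $Y_{t,n}$, prove an improved version of Peligrad's Lemma 3:
\[
\E\Big|\sum_{t\le m}Y_{t,n}\Big|^{2+\delta}\ll \big(m\,b_n^{\delta}\,\E X_0^2\big)\,h_\delta(m) + \sigma_m^{2+\delta}.
\]
This is done by the dyadic recursion $S_{2m}=S_m+S'_m$ (with a gap), using $\rho$-mixing to bound cross terms by $\rho^{2/(2+\delta)}(2^i)$ via Hölder. The sum of these terms over the recursion produces exactly the factor $h_\delta$, with constant $5$. The $h_d^{(2+\delta)/4}$ factor appears when $\sigma_m^2$ is normalised by its lower bound.

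\textbf{Step 3: blocking and Lyapunov.} Split $[1,n]$ into $k_n$ big blocks of length $p_n$ separated by small blocks of length $q_n$, with $p_n,q_n$ slowly varying fractions of $n$ chosen so that $k_n\rho(q_n)\to0$ is not required. Instead, use the $\rho$-mixing characteristic-function approximation: the product defect is bounded by $\prod(1+\rho(q_n))$-type factors, handled by Peligrad's Lemma 2 argument. The small blocks are negligible in $L^2$ by Step 1. For the big blocks, the Lyapunov ratio is
\[
\frac{k_n\,\E|S_{p_n}(Y)|^{2+\delta}}{\sigma_n^{2+\delta}}\ll \frac{n\,b_n^\delta\,h_\delta(n)}{(n h_d^{-1}(n))^{(2+\delta)/2}}+k_n\Big(\frac{\sigma_{p_n}}{\sigma_n}\Big)^{2+\delta}.
\]
By the choice of $b_n$ the first term is $o(1)$ (up to a slowly varying slack, which is absorbed because we use $\gg$). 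The second term is $\asymp k_n^{-\delta/2}\to0$ by slow variation of $h$.

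\textbf{Main obstacle.} The main obstacle is Step 2 combined with the lower bound: obtaining moment and variance estimates whose losses are exactly $h_\delta$ and $h_d$ rather than $n^{\vep}$, so that the exponents in $b_n$ match and the slack is only slowly varying. I would first try to carry out the dyadic recursion with gaps $2^i_\ast$, tracking the constants multiplicatively, and then verify the sharp form by self-improving iteration as described in the introduction.
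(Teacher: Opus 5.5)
Your proposal has a genuine gap in the exponent bookkeeping that makes \eqref{condi:ii:improved} exactly sufficient, and it breaks both Step 1 and Step 3.

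\textbf{Step 1.} Lemma \ref{lem:lower:bound} gives $\sigma_n^2\ge \wt c_2\, n\exp\big(\sum_{i=k_0}^{[\log n]}\log\{1-\rho(2_\ast^i)\}\big)=\wt c_2\, n\, h_d^{-1/2}(n)$. Note the factor $2$ in the definition of $h_d$; the bound is not $\sigma_n^2\gg n h_d^{-1}(n)$.

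The upper factor for the tail part is also not the $h(n)$ of the original sequence. There is no general inequality bounding $\var(\sum_{t\le n}Z_{t,n})$ by $n\,h(n)\,\E Z_{0,n}^2$. What is available is Lemma \ref{lemm:upper:bound} applied to the tail sequence, with factor $\exp\big(\sum_{i\le[\log n]}\rho(2_\dag^i)\big)$.

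The idea you are missing is to choose $q,q_0$ so that $\ov\ell\le\ud\ell$. Then $2_\dag^i\ge 2_\ast^i$. Since $\rho$ is non-increasing, $\rho(2_\dag^i)\le\rho(2_\ast^i)\le-\log\{1-\rho(2_\ast^i)\}$, so the upper factor is $\ll h_d^{1/2}(n)$. The upper loss $h_d^{1/2}$ times the lower loss $h_d^{1/2}$ is exactly $h_d(n)$, which is what $g(b_n)\gg h_d(n)$ pays for. The $o(1)$ then comes from $\E[X_0^2g(|X_0|)\mathbf{1}\{|X_0|>b_n\}]\to0$. It does not come from $g(b_n)/h_d(n)\to\infty$, which the hypothesis does not give. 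With your lower bound $n h_d^{-1}$, the tail ratio is only $\ll h_d^{1/2}(n)\,\E[X_0^2g(|X_0|)\mathbf{1}\{|X_0|>b_n\}]$, which need not vanish.

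\textbf{Step 3.} The same error makes the first Lyapunov term of order $h_d^{(2+\delta)/4}(n)\,\E X_0^2$, which diverges when $\sum_i\rho(2^i)=\infty$. Your own remark that $h_d^{(2+\delta)/4}$ comes from normalising $\sigma_m^{2+\delta}$ is consistent only with $h_d^{-1/2}$.

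With the correct lower bound, the term is $O(1)$ by the very choice of $b_n$. There is no slowly varying slack, and $\gg$, being a statement about $g$, cannot supply one. To reach the $o(1)$ that Lyapunov requires, you must keep $\E|X_0|^{2+\delta}\mathbf{1}\{|X_0|\le b_n\}=o(b_n^\delta)$ (dominated convergence) in Step 2, instead of bounding it by $b_n^\delta\E X_0^2$.

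The blocking step is also unsupported as written. A defect of type $\prod_{j\le k_n}(1+\rho(q_n))$ is harmless only if $k_n\rho(q_n)\to0$ anyway; this is achievable with $k_n\to\infty$ slowly, but it is not avoided. Moreover, Peligrad's Lemma 2 is a variance comparison, not a characteristic-function estimate.

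\textbf{How the paper proceeds.} The paper sidesteps blocking entirely. Under \eqref{condition:Ibragimov}, the CLT is equivalent to uniform integrability of $(S_n^2/\sigma_n^2)$. So the paper truncates at $T=g^{\rm inv}(h_d(n))\le b_n$ and shows $\sigma_{n,2}=o(\sigma_n)$ by the $h_d^{1/2}\cdot h_d^{1/2}$ argument above. It then only needs $\sup_n\E|S_{n,1}/\sigma_n|^{2+\delta}<\infty$, from Lemma \ref{lem:inq:4th:moment} and the lower bound, and there $O(1)$ is precisely enough.
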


Notice that Theorem \ref{thm:main1} includes Theorem \ref{peligrad:1987:clt:thm}.
Indeed, since $g$ is non-decreasing and $h_d$ is an asymptotically sharper lower bound than $\ud h$, we have 
\[
g(n^{1/2}) \ge g\Big(n^{1/2} \big(h_\delta(n)\, h_d^{(2+\delta)/4}(n) \big)^{-1/\delta}\Big)\quad 
\text{and}\quad 
\ud h(n) \ll h_d(n), 
\]
and thus \eqref{condi:ii:improved} implies \eqref{condi:ii:modified:1}.
Moreover, we have the following simpler Corollary than Corollary \ref{peligrad:1987:clt:cor12}. 

\begin{corollary}
\label{cor:clt:log:mixing}
Let $(X_t)$ be a strictly stationary sequence satisfying \eqref{condition:Ibragimov} and let $\alpha>0$ and $\beta\in (1/2,1)$.
Assume either $(a)$ or $(b)$. 
\begin{align*}
(a). \qquad & (a1)\qquad \E X_0^2 (\log^+ |X_0|)^{2\alpha} <\infty, \\
           & (a2)\qquad \rho(n) \le \alpha(\log n)^{-1}\quad \text{for every $n$ sufficiently large}.
\end{align*}
\begin{align*}
(b). \qquad & (b1) \qquad  \E X_0^2 \exp\Big[
2\alpha \big(2\log^+ |X_0|\big)^{1-\beta}/(1-\beta)
\Big] <\infty, \\
& (b2) \qquad  \rho(n) \le \alpha(\log n)^{-\beta}\quad \text{for every $n$ sufficiently large}.
\end{align*}
Then $S_n/\sigma_n\stackrel{d}{\to} N(0,1)$ as $n\to\infty$. 
\end{corollary}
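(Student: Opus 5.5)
The plan is to deduce the corollary directly from Theorem \ref{thm:main1}. We take $g(x)=(\log^+ x)^{2\alpha}$ in case $(a)$ and $g(x)=\exp\big[2\alpha(2\log^+x)^{1-\beta}/(1-\beta)\big]$ in case $(b)$. Both functions are non-decreasing, and the moment assumptions $(a1)$ and $(b1)$ are exactly $\E X_0^2g(|X_0|)<\infty$. So everything reduces to checking \eqref{condi:ii:improved}, which requires three things: an upper bound on $h_d(n)$, an upper bound on the argument-reducing factor $\big(h_\delta(n)h_d^{(2+\delta)/4}(n)\big)^{-1/\delta}$, and a check that $g$ evaluated at the reduced argument still dominates $h_d(n)$.

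\emph{Step 1: bound $h_d$.} Since $\rho(k)\to0$, we have $-\log(1-x)\le x+Cx^2$ for small $x$. Hence
\[
\log h_d(n)\le 2\sum_{i=k_0}^{[\log n]}\rho(2_\ast^i)+2C\sum_{i=k_0}^{[\log n]}\rho^2(2_\ast^i).
\]
Because $\ud\ell$ is slowly varying, $2^i_\ast=2^{i(1+o(1))}$, and more precisely $\log 2^i_\ast=i+\log\ud\ell(2^i)$, where $|\log\ud\ell(2^i)|=o(i)$. We then estimate each case.
\begin{itemize}
\item Case $(a2)$: $\rho(2^i_\ast)\le\alpha/(i+\log\ud\ell(2^i))$. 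The squared sum converges, so $\log h_d(n)\le 2\alpha\log\log n+2\alpha\sum_i\big(1/(i-|\log\ud\ell(2^i)|)-1/i\big)+O(1)$. The key point is to show that this correction sum is $O(1)$, i.e.\ that $\sum_i|\log\ud\ell(2^i)|/i^2<\infty$. From Lemma \ref{lem:1:peligrad:1987}, $|\log\ud h(2^i)|\ll\sum_{j\le i}\rho(2^j)\ll\log i$, and we may choose $q_0(n)=(\log n)^{-3}$, say, so that $|\log\ud\ell(2^i)|\ll\log i$ and the sum converges. Thus $h_d(n)\ll(\log n)^{2\alpha}$.
\item Case $(b2)$: the same argument gives $\log h_d(n)\le 2\alpha\sum_{i\le\log n}i^{-\beta}+O\big(\sum_i i^{-2\beta}\big)+O\big(\sum_i \log i\cdot i^{-1-\beta}\big)$. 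Here we use $\beta>1/2$ so that $\sum_i i^{-2\beta}<\infty$; this is exactly where the restriction $\beta>1/2$ enters. We also need $|\log\ud\ell(2^i)|\ll i^{1-\beta}$, which holds with $q_0$ a power of $\log n$. This yields $h_d(n)\ll\exp\big(2\alpha(\log n)^{1-\beta}/(1-\beta)\big)$.
\end{itemize}

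\emph{Step 2: bound the reduction factor.} Since $\rho^{2/(2+\delta)}(2^i)\ll i^{-2/(2+\delta)}$ (respectively $i^{-2\beta/(2+\delta)}$), we get $\log h_\delta(n)\ll(\log n)^{\gamma}$ with some $\gamma<1$ (taking $\delta$ small in case $(a)$), and $\log h_d(n)$ has the orders computed in Step 1. The argument is therefore $m_n:=n^{1/2}e^{-r_n}$ with $r_n=O\big((\log n)^{\gamma'}\big)$ for some $\gamma'<1$. Consequently $2\log m_n=\log n-2r_n/\ln 2$ (recall that $\log$ is base $2$), i.e.\ $2\log m_n=\log n\,(1-o(1))$.

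\emph{Step 3: compare, which is the main obstacle.} In case $(a)$, $g(m_n)=\big((\log n)/2\big)^{2\alpha}(1-o(1))^{2\alpha}\asymp(\log n)^{2\alpha}\gg h_d(n)$ by Step 1, so \eqref{condi:ii:improved} holds in the $O(\cdot)$ sense. In case $(b)$ we need
\[
\frac{2\alpha}{1-\beta}\big(2\log m_n\big)^{1-\beta}-\frac{2\alpha}{1-\beta}(\log n)^{1-\beta}\ge -C.
\]
The left side is approximately $-2\alpha(\log n)^{-\beta}\cdot 2r_n/\ln2$. This is bounded provided $r_n=O\big((\log n)^{\beta}\big)$. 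Now $\log h_\delta(n)\asymp(\log n)^{1-2\beta/(2+\delta)}$, and $1-2\beta/(2+\delta)\le\beta$ iff $\beta\ge(2+\delta)/(4+\delta)$, which tends to $1/2$ as $\delta\to0$. Likewise $\log h_d(n)\asymp(\log n)^{1-\beta}\le(\log n)^{\beta}$ exactly because $\beta>1/2$. So for a given $\beta>1/2$ we fix $\delta$ small enough that $\beta\ge(2+\delta)/(4+\delta)$, and then $r_n=O\big((\log n)^\beta\big)$. This delicate exponent bookkeeping, together with the control of $\log\ud\ell(2^i)$ inside $2_\ast^i$ in Step 1, is where I expect the real work to lie.
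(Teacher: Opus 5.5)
Your proposal is correct and follows essentially the same route as the paper. You reduce to Theorem \ref{thm:main1}, bound $h_d$ via $-\log(1-x)\le x+Cx^2$ together with $|\log\ud\ell(2^i)|\ll i^{1-\beta}$ (so both the squared terms and the $\ud\ell$-correction are $O(i^{-2\beta})$, summable for $\beta>1/2$), bound $h_\delta$, and show the shrunken argument costs only $O\big((\log n)^{1-2\beta/(2+\delta)-\beta}\big)$ in the exponent; this is exactly the paper's condition $\delta<(4\beta-2)/(1-\beta)$, and the paper's use of the sum form $g(x)=\exp\big(2\alpha\sum_{i\le[\log x^2]}i^{-\beta}\big)$ with a tail sum $E_n$, instead of your mean-value estimate, is immaterial.

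There is one harmless slip in case $(b)$. There $|\log\ud h(2^i)|$ is of order $i^{1-\beta}$, not $\log i$, so the correction term should read $O\big(\sum_i i^{1-\beta}\cdot i^{-1-\beta}\big)=O\big(\sum_i i^{-2\beta}\big)$ rather than $O\big(\sum_i\log i\cdot i^{-1-\beta}\big)$. This is already covered by the term you control using $\beta>1/2$.
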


We give the proof of Corollary \ref{cor:clt:log:mixing} in the next section only for the case $(b)$ since that 
for the case $(a)$ is similar and easier. In the following remarks we denote $\log^{\vee 2}x = \log (x\vee 2)$ for further convenience. 

\begin{remark}
\label{rem:log(n)^(-1)}
\noindent $(\mathrm{i})$ According to Bradley \cite[Corollary 1]{bradley:1987}, there exists a strictly stationary sequence $X=(X_t)$ satisfying \eqref{condition:Ibragimov}, $\rho(n)\ll (\log n)^{-1}$, and $\mathbb{E}[ X_0^2 (\log^+ |X_0|)^\gamma ] <\infty$ for some $\gamma>0$, for which the CLT fails. Peligrad \cite{peligrad:1987} pointed out that Corollary \ref{peligrad:1987:clt:cor12} implies $\sup_{n} \rho(n)\log n > \gamma/2$ must hold in this setting, but the borderline case $\sup_{n} \rho(n)\log n = \gamma/2$ remained open. Corollary \ref{cor:clt:log:mixing} resolves this issue by showing that the CLT indeed holds at this borderline, as illustrated in Figure \ref{fig:parameter_regions} $($A$)$.

\vspace{2mm}
\noindent $(\mathrm{ii})$ In the setting of $(\mathrm{i})$, we specifically assume that $X$ satisfies $\rho(n) = \alpha (\log^{\vee 2} n)^{-1}$ for some $\alpha>0$. To apply Bradley \cite[Theorem 1]{bradley:1987}, we set $q(x) = x^2 (\log^+ x)^\gamma$ and $\tau(n) = \alpha (\log^{\vee 2} n)^{-1}$ in condition $(1.5)$ therein. Using the non-decreasing property of $q$, we evaluate condition $(1.5)$ for sufficiently large $n$ as follows:
\begin{align*}
q\Big( n^{1/2} \exp\Big( -\frac{d}{2} \sum_{k=1}^n k^{-1} \tau(k) \Big) \Big) 
&\le q\big(n^{1/2} (\log n)^{-d\alpha/2}\big) \\
&= n (\log n)^{-d\alpha} \Big( \log \big( n^{1/2} (\log n)^{-d\alpha/2} \big) \Big)^\gamma \\
&= 2^{-\gamma} n (\log n)^{\gamma - d\alpha} + o\big(n (\log n)^{\gamma - d\alpha}\big) \quad \text{as } n \to \infty.
\end{align*}
Therefore, for the last expression to be $o(n)$, the inequality $\gamma/\alpha < d$ must hold, where $d$ is a given constant in \cite[Theorem 1]{bradley:1987}. 

Since Corollary \ref{cor:clt:log:mixing} ensures that the CLT holds in the region $\gamma \ge 2\alpha$, a natural 
subsequent question is: in which subspace of $\gamma < 2\alpha$ can one construct a counterexample to the CLT 
$($see Figure \ref{fig:parameter_regions} $($A$))$. We merely note that, in light of Theorem 1 and Proposition 0 of 
\cite{bradley:1987}, the constant $d$ appears to depend on $A_j$ and $B_\tau$ defined therein, and a further analysis of these constants is required.
\end{remark}

\begin{remark}
\label{rem:log(n)^(-beta)}
\noindent $(\mathrm{i})$ It seems difficult to extend Corollary \ref{cor:clt:log:mixing} $(b)$ to the region $\beta \in (0, 1/2)$.

\vspace{2mm}
\noindent $(\mathrm{ii})$ Bradley \cite[Corollary 2]{bradley:1980} proved that for any $\beta, \gamma > 0$ with $\beta + \gamma \le 1$, there exists a strictly stationary sequence $X=(X_t)$ satisfying \eqref{condition:Ibragimov}, $\rho(n) \ll (\log n)^{-\beta}$, and $\E[ X_0^2 (\log^+ |X_0|)^\gamma ] < \infty$ for which the CLT fails. In contrast, Peligrad \cite{peligrad:1987} pointed out that Corollary \ref{peligrad:1987:clt:cor12} implies that 
the CLT holds for any $\gamma > 0$ and $\beta \in (0,1)$ satisfying $\beta + \gamma > 1$.

Our Corollary \ref{cor:clt:log:mixing} provides further insight into the borderline case $\beta + \gamma = 1$ with $\beta \in (1/2, 1)$. First, we replace conditions $(b1)$ and $(b2)$ respectively with equivalent conditions 
\begin{align*}
(b1)\qquad & \E\Big[ |X_0|^2 \exp\Big( (\log^+ |X_0|)^\gamma \Big) \Big] < \infty, \\
(b2)\qquad & \rho(n) \le \gamma 2^{-(\gamma+1)} (\log^{\vee 2} n)^{-\beta}\quad \text{for every $n$ sufficiently large}.
\end{align*}
If we specifically set $\rho(n) = \alpha (\log^{\vee 2} n)^{-\beta}$, the CLT holds provided that the coefficient $\alpha$ satisfies $\alpha \le \gamma 2^{-(\gamma+1)}$ for $\gamma \in (0, 1/2)$. 

To illustrate the sharpness of this borderline, we verify that a non-Gaussian limit $($or a failure of the CLT$)$ can indeed occur even when $\beta + \gamma = 1$. Following the notation in Bradley \cite[Theorem 1]{bradley:1980}, we set $\tau(n) = \alpha (\log^{\vee 2} n)^{-\beta}$ and $q(x) = x^2 \exp((\log^+ x)^\gamma)$. Applying the relation $\beta + \gamma = 1$ $($i.e., $1 - \beta = \gamma$$)$, we evaluate the following expression for sufficiently large $n$:
\begin{align*}
q\Big( n^{1/2} &\exp\Big( -\frac{d}{2} \sum_{k=1}^n k^{-1} \tau(k) \Big) \Big) \\
&\le q\Big( n^{1/2} \exp\Big( -\frac{\alpha d}{2(1-\beta)} (\log n)^{1-\beta} \Big) \Big) \\
&= n \exp\Big( -\frac{\alpha d}{\gamma} (\log n)^\gamma + 2^{-\gamma} (\log n)^\gamma \Big\{ 1 - \frac{\alpha d}{\gamma} (\log n)^{\gamma-1} \Big\}^\gamma \Big).
\end{align*}
For a fixed $\gamma$, we can choose $\alpha$ large enough such that $\alpha d > \gamma 2^{-\gamma}$, 
which ensures that the last expression becomes $o(n)$ as $n \to \infty$. 
This also holds with $\alpha d \ge \gamma 2^{-\gamma}$ if $\gamma > 1/2$. 
Consequently, condition $(1.5)$ of Bradley \cite[Theorem 1]{bradley:1980} 
is satisfied. Since other conditions on $\tau$ and $q$ follow for $\gamma, \beta \in (0,1)$, 
we confirm that a counterexample to the CLT exists precisely at this borderline. 

In summary, when $\beta+\gamma=1$, the parameter space in the $(\alpha, \gamma)$-plane can be separated 
into three distinct parts: the CLT holds when $\alpha \le \gamma 2^{-(\gamma+1)}$ for $\gamma \in (0,1/2)$; 
Bradley's counterexample condition remains applicable when $\alpha > \gamma 2^{-\gamma} / d$ for $\gamma, \beta \in (0,1)$ and 
$\alpha = \gamma 2^{-\gamma} / d$ for $\gamma > 1/2$; 
and the intermediate space constitutes an unresolved gap that depends on the specific value of the constant $d>0$ 
$($see Figure \ref{fig:parameter_regions} $($B$))$. 
\end{remark}

\begin{figure}[htbp]
\centering
\begin{subfigure}[b]{0.49\textwidth}
\centering
\begin{tikzpicture}[scale=1.0]
    \fill[blue!15] (0,0) -- (2.1,4.2) -- (0,4.2) -- cycle;

    \fill[gray!20] (0,0) -- (2.1,4.2) -- (4.2,4.2) -- (4.2,2.1) -- cycle;

    \fill[red!10] (0,0) -- (4.2,2.1) -- (4.2,0) -- cycle;

    \draw[->, >=stealth, thick] (0,0) -- (4.5,0) node[right] {$\alpha$};
    \draw[->, >=stealth, thick] (0,0) -- (0,4.5) node[above] {$\gamma$};
    \node[below left] at (0,0) {$0$};

    \draw[very thick, blue!80!black] (0,0) -- (2.1,4.2) node[above left, font=\small] {$\gamma = 2\alpha$};

    \draw[thick, dashed, red!70!black] (0,0) -- (4.2,2.1) node[right, font=\small] {$\gamma = d\alpha$};

    \node[align=center, blue!80!black, font=\footnotesize] at (0.8, 3.0) {\textbf{CLT holds}};
    \node[align=center, gray!60!black, font=\footnotesize] at (2.8, 2.5) {\textbf{Unknown}};
    \node[align=center, red!70!black, font=\footnotesize] at (3.5, 0.8) {\textbf{CLT fails}};
\end{tikzpicture}
\caption{$\rho(n) \sim \alpha (\log n)^{-1}$}
\end{subfigure}
\hfill
\begin{subfigure}[b]{0.49\textwidth}
\centering
\begin{tikzpicture}[scale=1.0]
    \begin{scope}
        \clip (0,0) rectangle (4.2, 4.2);

        \fill[blue!15] (0,0) 
            -- plot[domain=0:0.5, samples=30] ({3.0*\x*pow(2, -\x-1)}, {4.0*\x}) 
            -- (0,2.0) -- cycle;

        \fill[gray!20] (0,2.0) 
            -- (0,4.0) 
            -- plot[domain=1.0:0, samples=40] ({6.0*\x*pow(2, -\x)}, {4.0*\x}) 
            -- plot[domain=0:0.5, samples=30] ({3.0*\x*pow(2, -\x-1)}, {4.0*\x}) 
            -- (0,2.0) -- cycle;

        \fill[red!10] plot[domain=0:1.0, samples=40] ({6.0*\x*pow(2, -\x)}, {4.0*\x}) 
            -- (4.2,4.0) -- (4.2,0) -- cycle;

        \draw[very thick, blue!80!black, domain=0:0.5, samples=30] 
            plot ({3.0*\x*pow(2, -\x-1)}, {4.0*\x});

        \draw[thick, dashed, red!70!black, domain=0:0.5, samples=20] 
            plot ({6.0*\x*pow(2, -\x)}, {4.0*\x});

        \draw[very thick, red!80!black, domain=0.5:1.0, samples=20] 
            plot ({6.0*\x*pow(2, -\x)}, {4.0*\x});
    \end{scope}

    \draw[->, >=stealth, thick] (0,0) -- (4.5,0) node[right] {$\alpha$};
    \draw[->, >=stealth, thick] (0,0) -- (0,4.5) node[above] {$\gamma$};
    \node[below left] at (0,0) {$0$};

    \draw[dashed, thick, black!50] (0, 2.0) node[left, font=\footnotesize] {$1/2$} -- (4.2, 2.0);
    \draw[dashed, thick, black!50] (0, 4.0) node[left, font=\footnotesize] {$1$} -- (4.2, 4.0);

    \node[blue!80!black, font=\footnotesize, above right] at (0.53, 2.0) {$\alpha = \gamma 2^{-(\gamma+1)}$};
    \node[red!70!black, font=\footnotesize, right] at (3.0, 3.8) {$\alpha = \gamma 2^{-\gamma}/d$};
    
    \node[red!80!black, font=\tiny, align=left] at (2.2, 2.9) {Includes boundary\\($\gamma > 1/2$)};

    \node[align=center, blue!80!black, font=\footnotesize] at (0.25, 1.2) {\textbf{CLT}\\ \textbf{holds}};
    \node[align=center, gray!60!black, font=\footnotesize] at (1.0, 3.6) {\textbf{Unknown}};
    \node[align=center, red!70!black, font=\footnotesize] at (3.5, 1.2) {\textbf{CLT fails}};
\end{tikzpicture}
\caption{Borderline $\beta+\gamma=1$ ($\gamma \in (0, 1)$)}
\end{subfigure}

\caption{Parameter regions for the central limit theorem in the $(\alpha, \gamma)$-plane. The blue regions 
represent the parameter space where the CLT is established by Corollary \ref{cor:clt:log:mixing}, while 
the red regions indicate where the conditions for Bradley's counterexamples are satisfied. The intermediate 
gray regions represent the unresolved gaps, which depend on the unspecified constant $d>0$. 
Panel (A) corresponds to the case $\rho(n) \sim \alpha (\log n)^{-1}$ discussed in Remark \ref{rem:log(n)^(-1)}. 
Panel (B) illustrates the borderline case $\beta + \gamma = 1$ with $\gamma \in (0, 1)$ and 
$\rho(n) \sim \alpha (\log^{\vee 2} n)^{-\beta}$ detailed in Remark \ref{rem:log(n)^(-beta)}, where the boundary 
$\alpha = \gamma 2^{-\gamma}/d$ is included in the counterexample region for $\gamma \in (1/2, 1)$.}
\label{fig:parameter_regions}
\end{figure}

In somewhat different examples, our results again yield clear relations between 
moments and the decay rates of the mixing coefficients. These examples further 
clarify the essence of the open problems in the extreme borderline cases.

\begin{corollary}
\label{cor:clt:new:log:mixing}
Let $(X_t)$ be a strictly stationary sequence satisfying \eqref{condition:Ibragimov} and let $\alpha>0$ and $\beta <1$.
Assume either $(a)$ or $(b)$. 
\begin{align*}
(a). \qquad & (a1)\qquad \E \big[X_0^2 (\log^+ \log^+ |X_0|)^{2\alpha}\big] <\infty, \\
           & (a2)\qquad \rho(n) \le \alpha(\log n)^{-1}(\log \log n)^{-1}\quad \text{for every $n$ sufficiently large}.
\end{align*}
\begin{align*}
(b). \qquad & (b1) \qquad  \E \Big[ X_0^2 \exp\big(
2\alpha \big(\log^+ \log^+ |X_0|\big)^{1-\beta}/(1-\beta)
\big)\Big] <\infty, \\
& (b2) \qquad  \rho(n) \le \alpha (\log n)^{-1}(\log \log n)^{-\beta} \quad \text{for every $n$ sufficiently large}.
\end{align*}
Then $S_n/\sigma_n\stackrel{d}{\to} N(0,1)$ as $n\to\infty$. 
\end{corollary}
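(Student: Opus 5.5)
We deduce Corollary \ref{cor:clt:new:log:mixing} from Theorem \ref{thm:main1}, arguing as for Corollary \ref{cor:clt:log:mixing}. We treat case $(b)$; case $(a)$ is the same computation with $(\log\log)^{1-\beta}/(1-\beta)$ replaced by $\log\log$. We take $g(x)=\exp\big(2\alpha(\log^+\log^+x)^{1-\beta}/(1-\beta)\big)$, which is non-decreasing. With this choice $\E X_0^2g(|X_0|)<\infty$ is exactly $(b1)$, and the only remaining task is to verify \eqref{condi:ii:improved}.

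\emph{Upper bound for $h_d$.} By $(b2)$, $\rho(2^i)\le \alpha i^{-1}(\log i)^{-\beta}$ for large $i$, so $\rho\to0$. Hence $-\log(1-\rho)=\rho+O(\rho^2)$, and $\sum\rho^2(2_\ast^i)<\infty$, so the quadratic terms only change $h_d$ by a bounded factor. The argument is $2_\ast^i=[2^i\ud\ell(2^i)]$, and $\ud\ell$ is slowly varying, so $\log 2_\ast^i=i+o(i)$. This is the delicate step. We need $\log 2_\ast^i \ge i-O(i/(\log i)^{K})$ with $K$ large enough that
\[
\frac{1}{\log 2_\ast^i\,(\log\log 2_\ast^i)^{\beta}}=\frac{1}{i(\log i)^\beta}\big(1+O(\varepsilon_i)\big)
\]
with $\sum_i \varepsilon_i/(i(\log i)^\beta)<\infty$. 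For this we use the explicit form $\ud\ell=\ud h^4q_0$. Here $\ud h$ is $\exp(-c\sum\rho(2^i))=\exp(-c'(\log\log n)^{1-\beta})$, and we pick $q_0(n)=(\log n)^{-3}$, which is admissible since $\sum_i i^{-3/2}<\infty$. Then $-\log\ud\ell(2^i)\ll \log i$, so $\varepsilon_i\ll \log i/i$ and the sum converges. Consequently
\[
h_d(n)\le C\exp\Big(2\alpha\sum_{i\le\log n}i^{-1}(\log i)^{-\beta}\Big)\le C'\exp\Big(\tfrac{2\alpha}{1-\beta}(\log\log n)^{1-\beta}\Big).
\]
Here the integral comparison error is $O(1)$ because the summand is monotone.

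\emph{Control of the argument of $g$.} We have $h_\delta(n)=\exp\big(5\sum\rho^{2/(2+\delta)}(2^i)\big)$. Since $\rho^{2/(2+\delta)}(2^i)\le i^{-2/(2+\delta)}$, we get $\log h_\delta(n)\ll(\log n)^{\delta/(2+\delta)}=o(\log n)$. Likewise $\log h_d(n)=O((\log\log n)^{1-\beta})$. Hence the argument of $g$ satisfies
\[
x_n:=n^{1/2}\big(h_\delta h_d^{(2+\delta)/4}\big)^{-1/\delta}=n^{1/2+o(1)},\qquad \log x_n=\tfrac12\log n\,(1+o(1)).
\]
Then $\log\log x_n=\log\log n-1+o(1)$, since $\log$ is base $2$ and $\log(1/2)=-1$. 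This gives
\[
\log g(x_n)=\tfrac{2\alpha}{1-\beta}\big(\log\log n-1+o(1)\big)^{1-\beta}=\tfrac{2\alpha}{1-\beta}(\log\log n)^{1-\beta}-O\big((\log\log n)^{-\beta}\big).
\]
The error term tends to zero because $\beta>0$; if $\beta\le0$ it is still bounded or we must be more careful, see below. Comparing with the bound on $h_d$ then gives $g(x_n)\gg h_d(n)$, and Theorem \ref{thm:main1} yields the CLT.

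\emph{Main obstacle.} The main obstacle is the borderline bookkeeping: every $O(1)$ perturbation inside the exponent must be shown to stay bounded. There are two such places. The first is the shift $2^i\mapsto2_\ast^i$, handled above via the explicit $\ud\ell$. The second is the case $\beta\le0$ (allowed since only $\beta<1$ is assumed). There the correction $(\log\log n)^{-\beta}$ in $\log g(x_n)$ does not vanish, and for $\beta<0$ it grows. The remedy is a sharper estimate $\log\log x_n=\log\log n-1+O((\log n)^{-\delta'})$, which holds because all the slowly varying factors in $x_n$ are $\exp(o(\log n)^{1-\delta'})$. Combined with the mean value theorem, this makes the correction $O((\log\log n)^{-\beta}(\log n)^{-\delta'})\to0$ for every $\beta<1$.

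There is also a genuine constant loss of $\exp\big(-\tfrac{2\alpha}{1-\beta}[(\log\log n)^{1-\beta}-(\log\log n-1)^{1-\beta}]\big)$ in $g(x_n)$. For $\beta\in(0,1)$ this factor tends to $1$, and in case $(a)$ it equals $2^{-2\alpha}$ up to $1+o(1)$. Since \eqref{condi:ii:improved} only requires $\gg$, a bounded loss is harmless. I expect this to close the argument, but checking it for $\beta\le0$ is where I would spend the most care.
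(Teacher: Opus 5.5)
Your route is the one the paper intends; it says the proof is ``almost identical'' to that of Corollary~\ref{cor:clt:log:mixing}$(b)$. You take $g$ from the moment condition and bound $h_d$ via $-\log(1-x)\le x+x^2/(2(1-x))$, controlling $\log\ud\ell(2^i)$ through $\ud\ell=\ud h^4q_0$ with a polylogarithmic $q_0$. You then show $\log h_\delta(n)=o(\log n)$ and compare. For case $(a)$, and for $0\le\beta<1$ in case $(b)$, this is correct. Unlike Corollary~\ref{cor:clt:log:mixing}, no restriction such as $\beta>1/2$ arises, because an $o(\log n)$ perturbation of $\log x_n$ moves $\log\log x_n$ only by $o(1)$. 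Write $L=\log\log n$. There are two harmless slips:
\begin{itemize}
\item For $\beta<0$, $-\log\ud\ell(2^i)$ can be of order $(\log i)^{1-\beta}$, not $\ll\log i$. Your summability still survives, since $\sum_i(\log i)^{1-2\beta}i^{-2}<\infty$.
\item In case $(a)$ the loss is $((L-1)/L)^{2\alpha}\to1$, not $2^{-2\alpha}$.
\end{itemize}

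\textbf{The gap: $\beta<0$, which the statement allows.} Your bookkeeping compares $\log g(x_n)=\frac{2\alpha}{1-\beta}(L-1+o(1))^{1-\beta}$ with $\log h_d(n)\le C+\frac{2\alpha}{1-\beta}L^{1-\beta}$. By the mean value theorem, $L^{1-\beta}-(L-1)^{1-\beta}\ge(1-\beta)(L-1)^{-\beta}\to\infty$ when $\beta<0$. So the factor you call a ``genuine constant loss'' tends to $0$, not to a positive constant.

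Your remedy (a sharper $o(1)$ term plus the mean value theorem) only controls the perturbation around $L-1$. It does not touch the shift by $1$ itself, which comes from $\log\log n^{1/2}=\log\log n-1$ and is built into the argument of $g$ in \eqref{condi:ii:improved}. Sharper error terms cannot fix this. Suppose the exponents match exactly, i.e.\ $\sum_{i\le\log n}\rho(2^i)=\frac{\alpha}{1-\beta}L^{1-\beta}+O(1)$ for $\rho(n)=\alpha(\log n)^{-1}(\log\log n)^{-\beta}$, as the paper's sharpness remarks presuppose. Then $h_d(n)\ge\exp\big(2\sum_{i=k_0}^{[\log n]}\rho(2^i)\big)$, using $-\log(1-x)\ge x$, $2^i_\ast\le 2^i$ and monotonicity of $\rho$. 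Since $x_n\le n^{1/2}$, condition \eqref{condi:ii:improved} then genuinely fails for your $g$.

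Corollary~\ref{cor:clt:log:mixing}$(b1)$ avoids exactly this through the factor $2$ in $(2\log^+|X_0|)^{1-\beta}$. The analogue here, $(\log^+(2\log^+|X_0|))^{1-\beta}$, is equivalent to $(b1)$ only for $\beta\ge 0$. An argument modeled on the paper's proof of Corollary~\ref{cor:clt:log:mixing} meets the same obstruction.

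The only way to keep $(b1)$ as stated for $\beta<0$ is slack in the constant. With $\log=\log_2$ as in the paper,
\[
\sum_{i\le N}i^{-1}(\log i)^{-\beta}=\ln 2\,(\log N)^{1-\beta}/(1-\beta)+O(1),
\]
so in fact $\log h_d(n)\le C+\frac{2\alpha\ln 2}{1-\beta}L^{1-\beta}$. Then $(L-1)^{1-\beta}-\ln 2\,L^{1-\beta}\to\infty$ for every $\beta<1$. Your displayed bound discards exactly this factor. You must either carry out that computation explicitly, noting that it rests on the base convention rather than on matched constants, or restrict your argument to $0\le\beta<1$.
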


Since the proof of Corollary \ref{cor:clt:new:log:mixing} follows almost identically 
to that of Corollary \ref{cor:clt:log:mixing}, we omit the details here (they are available upon request). 

\begin{remark}
\noindent $(\mathrm{i})$ According to Bradley \cite[Theorem 1]{bradley:1980}, 
if we set alternative conditions to those in Remark \ref{rem:log(n)^(-1)}$(\mathrm{i})$ such that 
$\rho(n) \ll (\log n)^{-1}(\log \log n)^{-1}$ and $\E [X_0^2 (\log^{\vee 2}\log^+ |X_0|)^\gamma] <\infty$ 
for some $\gamma\in (0,1)$, then there exists a strictly stationary sequence $X$ satisfying these conditions 
for which the CLT fails.\footnote{Notice that the condition $q(xy)\le q(x)q(y)$ in \cite[Theorem 1]{bradley:1980} 
is explicitly satisfied for $\gamma\in (0,1)$. While it is unclear if this holds for $\gamma>1$, this may not be a 
strict limitation; see \cite[p.99]{bradley:1980} for details.}
In view of Corollary \ref{cor:clt:new:log:mixing}$(a)$, this implies that 
$\sup_n \rho(n) (\log n) (\log\log n) > \gamma/2$ must hold in this counterexample setting.

\vspace{2mm}
\noindent $(\mathrm{ii})$ In addition to $(\mathrm{i})$, assume specifically that $X$ satisfies 
$\rho(n)=\alpha (\log^{\vee 2} n)^{-1}\big(\log^{\vee 2} \log^+ n \big)^{-1}$. By setting $q(x)=x^2 (\log^{\vee 2}\log^{+} x )^\gamma$ 
and $\tau(n)= \alpha (\log^{\vee 2} n)^{-1}\big(\log^{\vee 2} \log^+ n\big)^{-1}$, the same consideration as in 
Remark \ref{rem:log(n)^(-1)}$(\mathrm{ii})$ applies. The borderline for the validity and failure of the CLT takes 
an analogous structure to Figure \ref{fig:parameter_regions}$($A$)$, where the precise value of the constant $d>0$ again becomes crucial. 
\end{remark}

\begin{remark}
\noindent $(\mathrm{i})$ By applying \cite[Theorem 1]{bradley:1980}, we can verify that for any $\beta<1$ and $\gamma\in(0,1)$ with 
$\beta+\gamma \le 1$, there exists a strictly stationary sequence $X$ satisfying \eqref{condition:Ibragimov}, 
$\rho (n) \ll (\log n)^{-1} (\log \log n)^{-\beta}$, and $\E [X_0^2 \exp \big( ( \log^+\log^+ |X_0|)^\gamma \big)] <\infty$ 
for which the CLT fails. On the other hand, in view of Corollary \ref{cor:clt:new:log:mixing}$(b)$, the CLT holds for 
any $\beta<1$ and $\gamma>0$ satisfying $\beta+\gamma>1$. Therefore, the borderline case $\beta+\gamma=1$ $($with $\beta<1$$)$ 
emerges again as a critical problem.

To investigate this borderline, we first replace conditions $(b1)$ and $(b2)$ with the equivalent borderline specifications:
\begin{align*}
& (b1') \qquad  \E X_0^2 \exp\Big[\big(\log^+ \log^+ |X_0|\big)^{\gamma}
\Big] <\infty, \\
& (b2') \qquad  \rho(n) \le \gamma/2\, (\log n)^{-1}(\log \log n)^{-\beta} \quad \text{for every $n$ sufficiently large}.
\end{align*}
If we specifically set $\rho(n)=\alpha (\log^{\vee 2}n)^{-1} \big(\log^{\vee 2}\log^{+} n \big)^{-\beta}$ and 
$q(x)= x^2 \exp\big((\log^+\log^+ x)^\gamma \big)$, we can apply Bradley's construction. A counterexample to 
the CLT can be established in the region $\{\alpha > \gamma / d\} \cap \{\gamma\in (0,1)\}$ for a given 
constant $d>0$ $($subject to the same footnote condition regarding $q(xy)\le q(x)q(y)$$)$.

In summary, when $\beta+\gamma=1$, the parameter space in the $(\alpha,\gamma)$-plane separates into three distinct regions: 
the CLT holds when $\alpha \le \gamma/2$; Bradley's counterexample condition is applicable when $\alpha > \gamma/d$ $($for $\gamma<1$$)$; 
and the intermediate space remains unresolved. This structural trichotomy is fundamentally identical to the situation illustrated 
in Figure \ref{fig:parameter_regions}$($A$)$. 

\vspace{2mm}
\noindent$(\mathrm{ii})$
We briefly note that the analytical argument used here $($see the proof of Corollary \ref{cor:clt:log:mixing}$)$ 
can be systematically adapted to handle slower decay rates involving higher-order iterations of logarithms. 
By repeating essentially the same procedure, one can theoretically extend the results to an arbitrary 
depth of iterated logarithms $L_k(n)$ $($where $L_1(n) = \log n, L_2(n) = \log \log n, \dots$$)$ 
without any additional restrictions other than $\beta<1$.
\end{remark}

\section{Proofs}
\label{sec:proofs}
In proofs $c$ (without any accents) denotes an arbitrary positive constant whose value is not of interest.
When we need to distinguish positive constants within a proof, we write $c_1,c_2,\ldots$. 
Denote $S_n(m)= X_{m+1}+\cdots+ X_{n+m}$. We also denote $\sigma_\cdot=\sigma(\cdot)$ for simplicity of notation in the following.
We set $\prod_{i=t}^{s}=1$ if $t>s$ as regards the product notation. 

\begin{lemma}
\label{lemm:upper:bound}
 Suppose that $(X_t)$ satisfies \eqref{condition:Ibragimov}. For $\vep_1 \in (0,1)$ define 
\[
 \ov h(n)= \ov c_1 \E X_0^2 \exp\bigg(
\sum_{i=1}^{[(1-\vep_1) \log n]} \rho(2^i)/(1-\vep_1)
\bigg),
\]
where $\ov c_1( \rho, \vep_1)$ is a positive constant. 
Let $q(n)$ be a non-increasing slowly varying function such that 
$\sum_{i=1}^\infty q^{1/2}(2^i)<\infty$. 
Define a non-increasing slowly varying function by   
\[
 \ov \ell (m)= q(m) \ov h^{-1}(m).  
\]
Then, there exists a positive constant $\wt c_1 = \wt c_1\big( X,q,\vep_1 \big)$ such that for every $n\ge 1$
\[
 \sigma_n^2 \le \wt c_1 n \E X_0^2 \exp\bigg(\sum_{i=1}^{[\log n]} \rho \big(
[\,2^i \ov \ell (2^i)\,]
\big)
\bigg). 
\]
\end{lemma}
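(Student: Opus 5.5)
The plan is to redo Peligrad's dyadic block recursion for $\sigma_{2^k}$, but with gaps of length $m_k=[2^k\ov\ell(2^k)]$ in place of $2^{(1-\vep_1)k}$. The a priori bound $\sigma_m^2\le m\ov h(m)$ of Lemma \ref{lem:1:peligrad:1987} controls the error terms. Throughout we use $\var(Y+Z)\le \var Y+\var Z+2\rho\,\sigma_Y\sigma_Z$ for $\sigma$-fields separated by a gap.

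\emph{Step 1 (one-step recursion).} Fix $k$ and split $S_{2^{k+1}}$ into
\begin{itemize}
\item a block $S_{2^k-m_k}$;
\item a gap block of length $m_k$;
\item a block $S_{2^k}(2^k)$.
\end{itemize}
The two long blocks are separated by $m_k$ indices. Their combined variance is therefore at most $2\sigma_{2^k}^2(1+\rho(m_k))$, up to the correction from removing the gap. Minkowski's inequality and stationarity then give
\[
\sigma_{2^{k+1}}\le \big(2(1+\rho(m_k))\big)^{1/2}\sigma_{2^k}+2\sigma_{m_k}.
\]
The removed piece has length $m_k$, and $\sigma_{m_k}^2\le m_k\ov h(m_k)$ by Lemma \ref{lem:1:peligrad:1987}. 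Since $\ov h$ is non-decreasing and slowly varying and $m_k\le 2^k$, we get $\ov h(m_k)\le \ov h(2^k)$. Hence
\[
\sigma_{m_k}^2\le 2^k q(2^k)\ov h^{-1}(2^k)\ov h(2^k)=2^kq(2^k).
\]
This is exactly why $\ov\ell=q\,\ov h^{-1}$ was chosen: the $\ov h$ factor cancels. For the $2^k-m_k$ versus $2^k$ discrepancy, I would instead block as $S_{2^k}$, a gap of length $m_k$, and $S_{2^k}(2^k+m_k)$. Then $\sigma_{2^{k+1}}\le \sigma(\text{two blocks})+\sigma_{m_k}$ with the same bound.

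\emph{Step 2 (normalize and iterate).} Put $a_k=\sigma_{2^k}/2^{k/2}$. The recursion becomes
\[
a_{k+1}\le (1+\rho(m_k))^{1/2}a_k+c\,q^{1/2}(2^k).
\]
We may assume $a_k\ge 1$ in the regime of interest, since otherwise there is nothing to prove. Iterating and using $1+x\le e^x$ gives
\[
a_k\le \prod_{i<k}(1+\rho(m_i))^{1/2}\Big(a_1+c\sum_i q^{1/2}(2^i)\Big).
\]
Summability of $q^{1/2}(2^i)$ makes the bracket bounded, and the product is at most $\exp\big(\tfrac12\sum_{i<k}\rho(m_i)\big)$. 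Squaring gives
\[
\sigma_{2^k}^2\le c\,2^k\,\E X_0^2\exp\Big(\sum_{i\le k}\rho([2^i\ov\ell(2^i)])\Big).
\]
Some care is needed at small $i$, where $m_i$ might be $0$; there we simply use $\rho\le 1$ and absorb the finitely many terms into $\wt c_1$.

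\emph{Step 3 (general $n$).} Write $n$ in binary, $n=\sum_j 2^{k_j}$, and bound $\sigma_n\le\sum_j\sigma_{2^{k_j}}$ using the dyadic bounds. The exponential factor is monotone in $k$, so
\[
\sigma_n\le c\exp\Big(\tfrac12\sum_{i\le[\log n]}\rho(m_i)\Big)\sum_{j}2^{k_j/2},
\]
and $\sum_j 2^{k_j/2}\le c\,n^{1/2}$ as a geometric sum.

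\emph{Main obstacle.} The delicate point is Step 1. We must make sure that the error term $\sigma_{m_k}$ really enters additively at the level of standard deviations, giving $q^{1/2}$, rather than multiplying $a_k$. This requires that dividing the error by $a_k$ is harmless; it is, because $a_k$ is bounded below by the lower bound of Lemma \ref{lem:1:peligrad:1987}, or trivially by treating $a_k<1$ separately. We also need the monotonicity of $\ov h$ to compare $\ov h(m_k)$ with $\ov h(2^k)$. If that comparison fails near small $k$, I would use the slow variation of $\ov h$ with a constant factor.
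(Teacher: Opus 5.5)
Your proposal is correct and follows essentially the paper's proof. It uses the same two-block recursion $\sigma_{2m}\le 2^{1/2}(1+\rho(p))^{1/2}\sigma_m+2\sigma_p$ with gap $p=[m\,\ov\ell(m)]$, the cancellation $p\,\ov h(p)\le m\,q(m)$ that motivates $\ov\ell=q\,\ov h^{-1}$, dyadic iteration using summability of $q^{1/2}(2^i)$ and $1+x\le e^x$, and the binary expansion of $n$. Two small points. First, with your shifted blocking the discrepancy is $S_{m_k}(2^k)-S_{m_k}(2^{k+1})$, so the error term is $2\sigma_{m_k}$ (as in your Step 1 display), not $\sigma_{m_k}$. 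Second, no lower bound on $a_k$ is needed anywhere, because the recursion is linear and never divides by $a_k$.
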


Notice that $X:=(X_t)$ in $\wt c_1$ determines the mixing sequence $\rho=(\rho_i)$ and we 
omit the dependence on $\rho$ in $\wt c_1$. 

\begin{proof}
 Recall from Theorem \ref{thm:Ibragimov} that there exists a slowly varying function $h(n)$ such that 
 $\sigma_n^2 =nh(n)$ holds for all $n\in\N$. 
 Therefore, for all integers $m,p\in\N$, we have 
\begin{align}
\label{ineq:sigma:rho:1}
 |\sigma_{2m}-\|S_m(0)+S_m(m+p)\|_2| \le 2\sigma_p = 2p^{1/2}h^{1/2}(p).
\end{align}
Moreover, by the definition of $\rho$-mixing coefficient 
\begin{align}
 \label{ineq:sigma:rho:2}
 |\E (S_m(0)+S_m(m+p))^2 -2 \sigma_m^2| \le 2\rho(p) \sigma_m^2. 
\end{align}
From the last two inequalities and the fact that $h\le \ov h$, 
we obtain 
\begin{align}
\label{ineq:upper:recurrence}
 \sigma_{2m} \le 2^{1/2} \big(1+\rho(p)\big)^{1/2} \sigma_m + 2p^{1/2} \ov h^{1/2}(p).
\end{align}
Since $p = [m \ov \ell(m)]$ and $\ov \ell(m) = q(m) \ov h^{-1}(m)$, the slow variation of $\ov h$ implies that
\begin{equation*}
    p^{1/2} \ov h^{1/2}(p) \le m^{1/2} q^{1/2}(m)
\end{equation*}
holds for all sufficiently large $m$. Combining this with \eqref{ineq:upper:recurrence}, we obtain \eqref{ineq:upper:recurrence:2}.
\begin{align}
 \sigma_{2m} 
& \le 2^{1/2} \big(
1+\rho([m \ov \ell (m)])
\big)^{1/2}\sigma_m + 2 m^{1/2} q^{1/2}(m). \label{ineq:upper:recurrence:2}
\end{align}
We further put $m=2^{j-1},\, j\in \N$ and write $2^i_\dag = [2^i \ov \ell (2^i)],\,i\in \{0\}\cup \N$. 
Then, by repeatedly applying \eqref{ineq:upper:recurrence:2}, we arrive at 
\begin{align*}
 \sigma(2^j) &\le \prod_{i=k}^{j-1}\big(
1+\rho(2^i_\dag)
\big)^{1/2} 2^{(j-k)/2} \sigma(2^k) \\
&\quad + 2 \sum_{i=1}^{j-k} 2^{(i-1)/2} \prod_{l=j-i+1}^{j-1} \big(1+\rho(2^l_\dag)\big)^{1/2} 2^{(j-i)/2} q^{1/2}(2^{j-i}) \\
& \le 2^{j/2} \prod_{i=k}^{j-1}\big(
1+\rho (2^i_\dag)
\big)^{1/2} \Big(\sigma(2^k)2^{-k/2}+2^{1/2} \sum_{i=1}^{j-1} q^{1/2}(2^i)\Big). 
\end{align*}
Due to the summability of $q^{1/2}$, we see by putting $k=1$ that 
there is a constant $c_1=c_1\big(X,q,\vep_1 \big)$ such that 
\[
 \sigma(2^j) \le c_1 \prod_{i=0}^{j-1}\big(
1+\rho (2^i_\dag ) 
\big)^{1/2} \cdot 2^{j/2} \sigma_1. 
\]
By the relation $1+x \le e^{x}$ for every $x>0$, we obtain
\[
 \sigma^2(2^j) \le c_1^2\, 2^j\, \E X_0^2\, \exp\bigg(
 \sum_{i=0}^{j-1} \rho \big(
[\,2^i \ov \ell (2^i)\,]
\big)
\bigg). 
\]
Now by writing $n$ in binary form and evaluating the error, we reach 
\[
 \sigma^2(n) \le \wt c_1 n\, \exp\bigg(
\sum_{i=1}^{[\log n]} \rho \big(
[\,2^i \ov \ell (2^i)\,]
\big)
\bigg) \E X_0^2. 
\]
\end{proof}

\begin{lemma}
\label{lem:lower:bound}
 Suppose that $(X_t)$ satisfies \eqref{condition:Ibragimov}. 
Let $q_0(n)$ be a non-increasing slowly varying function such that 
$\sum_{i=1}^\infty q^{1/2}_0 (2^i)<\infty$.
Define a non-increasing slowly varying function $\ud \ell$ by $\ud \ell = \ud h^4 q_0 $.  
Let $k_0\in \N$ be such that $\rho\big([2^k \ud \ell (2^k)]\big)<1$ for all $k\ge k_0$. Then there exists a positive constant 
$\wt c_2=\wt c_2\big(X,(\vep_i)_{i=1,2},k_0,q_0 \big)$ such that for all $n \ge 2^{k_0}$
\begin{align*}
 \sigma_n^2 \ge \wt c_2\,n\,\exp\bigg(
\sum_{i=k_0}^{[\log n]} \log \big\{ 1-\rho\big([2^i \ud \ell (2^i)]\big) \big\}
\bigg). 
\end{align*}
\end{lemma}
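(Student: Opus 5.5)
We mirror the proof of Lemma \ref{lemm:upper:bound}, now bounding $\sigma(2^j)$ from below through a multiplicative recurrence. For integers $m,p$ the triangle inequality gives
\[
 \sigma_{2m} \ge \|S_m(0)+S_m(m+p)\|_2 - 2\sigma_p ,
\]
and the definition of $\rho(p)$ gives
\[
 \E\big(S_m(0)+S_m(m+p)\big)^2 \ge 2\big(1-\rho(p)\big)\sigma_m^2 .
\]
Hence
\[
 \sigma_{2m} \ge 2^{1/2}\big(1-\rho(p)\big)^{1/2}\sigma_m - 2\sigma_p .
\]
We take $p=[m\,\ud\ell(m)]$. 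By Lemma \ref{lemm:upper:bound}, or simply by the crude upper bound $\sigma_p^2\le p\,\ov h(p)$ of Lemma \ref{lem:1:peligrad:1987}, we get $\sigma_p \le m^{1/2}\ud\ell^{1/2}(m)\,\ov h^{1/2}(m)(1+o(1))$. The subtraction is only harmless relative to $\sigma_m \ge (m\,\ud h(m))^{1/2}$, so we divide the recurrence by $\sigma_m$. Set $a_m := \sigma_m/(m^{1/2})$. Then
\[
 \frac{a_{2m}}{a_m} \ge \big(1-\rho(p)\big)^{1/2} - 2^{1/2}\,\frac{\sigma_p}{\sigma_m}
 \ge \big(1-\rho(p)\big)^{1/2} - c\,\Big(\frac{\ud\ell(m)\,\ov h(m)}{\ud h(m)}\Big)^{1/2}.
\]
With $\ud\ell=\ud h^4 q_0$ this error term is at most $c\,q_0^{1/2}(m)\,\ud h^{3/2}(m)\,\ov h^{1/2}(m)$.

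The main obstacle is the exponent bookkeeping: we need this error to be summable along $m=2^i$. The approach is to use the relation between $\ov h$ and $\ud h$. Both are exponentials of sums of $\rho(2^i)$, and since $\ud h\le c$ is non-increasing (indeed $\ud h^{-1}$ grows), we need $\ud h^{3/2}\ov h^{1/2}$ to be bounded, or at least not to destroy the summability of $q_0^{1/2}(2^i)$. We first try to check that the powers $4$ and the constants $1/(1-\vep_i)$ make $\ud h^{3/2}\ov h^{1/2}\ll 1$. If that fails, the fallback is to use Theorem \ref{thm:Ibragimov} directly: $\sigma_p/\sigma_m=(p h(p)/(m h(m)))^{1/2}\le c\,\ud\ell^{1/2}(m)\,(h(p)/h(m))^{1/2}$, and then bound $h(p)/h(m)\le \ov h(p)/\ud h(m)$. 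Either route yields an error $e_i$ with $\sum_i e_i<\infty$.

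Because $\rho(2^i_\ast)<1$ for $i\ge k_0$, the factor $(1-\rho(2^i_\ast))^{1/2}$ stays bounded away from $0$ along any finite stretch, but it may tend to $1$. We therefore write the recurrence as
\[
 a_{2^{i+1}} \ge \big(1-\rho(2^i_\ast)\big)^{1/2} a_{2^i}\Big(1 - \frac{e_i}{(1-\rho(2^i_\ast))^{1/2}}\Big).
\]
We choose $k_0$ larger if needed so that $e_i/(1-\rho(2^i_\ast))^{1/2}\le 1/2$ for $i \ge k_0$; since $\rho(n)\to0$, this is legitimate. Iterating from $k_0$ to $j-1$ gives
\[
 \sigma^2(2^j) \ge 2^j a^2_{2^{k_0}} \prod_{i=k_0}^{j-1}\big(1-\rho(2^i_\ast)\big)\prod_{i}\big(1-2e_i'\big)^2 .
\]
The last product is bounded below by a positive constant because $\sum e_i'<\infty$, which is exactly the role of $q_0$. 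Writing the first product as an exponential of logarithms produces the stated form.

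Finally, we pass from dyadic $n=2^j$ to general $n\ge 2^{k_0}$. Take $j=[\log n]$ and use $\sigma_n=n^{1/2}h^{1/2}(n)$ with slow variation of $h$, so that $\sigma_n^2\ge c\,\sigma^2(2^j)$ for $2^j\le n<2^{j+1}$. Alternatively, decompose $S_n$ into $S_{2^j}$ plus a remainder and use the upper bound on the remainder together with slow variation. We absorb the missing last factor $1-\rho(2^j_\ast)\le1$, and then all constants are collected into $\wt c_2$.
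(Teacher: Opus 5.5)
Your proof is correct, but it is organized differently from the paper's.

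The paper iterates the reversed inequality $\sigma_m\le 2^{-1/2}(1-\rho(p))^{-1/2}(\sigma_{2m}+2\sigma_p)$ upward from $2^k$ to $2^r$. The additive errors $\sigma(2^{k+i}_\ast)$ therefore come weighted by the growing products $\prod_{j=k}^{k+i}(1-\rho(2^j_\ast))^{-1/2}$. These products must be bounded by $\wh c_2^{1/2}\ud h^{-1/2}(2^{k+i})$, using $\ud\ell(2^k)\ge 2^{-\vep_1 k}$ and the monotonicity of $\rho$. Then $\ov h^{1/2}$ is traded for $\ud h^{-1/2}$. Finally, the accumulated error, of order $2^{k/2}\ud h(2^k)$ with a small constant, is absorbed into $\sigma(2^k)$ by taking $k$ large. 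That is exactly where the fourth power in $\ud\ell=\ud h^4q_0$ is consumed.

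You normalize first, so each dyadic step carries only a relative error $e_i=2^{1/2}\sigma_p/\sigma_m$ (with $m=2^i$), measured at the same scale. Summable relative errors then give a convergent product directly. This is shorter and needs neither the product estimate nor the absorption step. It also shows that a smaller power of $\ud h$ would do: already $\ud\ell=\ud h^2q_0$ works, since $\ud h\,\ov h$ is bounded. Both routes rest on the a priori bounds of Lemma \ref{lem:1:peligrad:1987}.

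A few points to tidy up.

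\textbf{The hedged check succeeds.} The check you were unsure about does go through, and your ``fallback'' is the same estimate, so drop it. For $m$ large, $p\le m$ because $\ud\ell\to0$, hence $\sigma_p^2/\sigma_m^2\le q_0(m)\,\ud h^3(m)\,\ov h(m)$. Moreover $\ud h^3\ov h=c\exp\{(\frac{1}{1-\vep_1}-\frac{3}{1-\vep_2})\Sigma_m\}\le c$, where $\Sigma_m=\sum_{i=1}^{[(1-\vep_1)\log m]}\rho(2^i)\ge0$, because $\vep_1<\vep_2$ makes the coefficient negative. Thus $e_i\le c\,q_0^{1/2}(2^i)$.

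\textbf{Do not enlarge $k_0$.} The index $k_0$ is fixed by the statement. Instead, start the iteration at some $k_1\ge k_0$ with $e_i'\le 1/2$ for $i\ge k_1$. Note that the omitted factors $1-\rho(2^i_\ast)$, $k_0\le i<k_1$, are at most $1$, so including them only weakens the bound. Cover the range $2^{k_0}\le n<2^{k_1}$ by $\min_{2^{k_0}\le j<2^{k_1}}\sigma_j^2/j>0$, exactly as in the paper's last step.

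\textbf{Typo in the product.} With $e_i'=e_i(1-\rho(2^i_\ast))^{-1/2}$, the last product should read $\prod_i(1-e_i')^2$, not $\prod_i(1-2e_i')^2$.
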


Notice that the choice of $q$ and $q_0$ is flexible and we may take  
a uniform function $q_0=q$ in Lemmas \ref{lemm:upper:bound}, \ref{lem:lower:bound}.

\begin{proof}
Let $m,p \in \N$ such that $m\ge p$. 
 From the two inequalities \eqref{ineq:sigma:rho:1} and \eqref{ineq:sigma:rho:2} in Lemma \ref{lemm:upper:bound}, 
\begin{align}
\label{eq:sigma:lower}
 \sigma_m \le 2^{-1/2} \big(1-\rho (p)\big)^{-1/2} (\sigma_{2m}+ 2\sigma_p).
\end{align}
Substituting $m=2^k$ and $p= [m \ud \ell (m)]=[2^k \ud \ell (2^k)]$ into \eqref{eq:sigma:lower}, 
and iterating the inequality, we arrive at  
\begin{align}
\begin{split}
\label{eq:sigma:lower:recussion}
 \sigma(2^k) &\le 2^{(k-r)/2} \sigma(2^r) \prod_{i=k}^{r-1} \big(
1-\rho \big( 2^i_\ast \big)
\big)^{-1/2} \\
&\quad + 2^{1/2}\sum_{i=0}^{r-k-1} 2^{-i/2} \prod_{j=k}^{k+i} \big(
1-\rho \big( 2^{j}_\ast \big)
\big)^{-1/2}\,\sigma\big( 2^{k+i}_\ast \big),
\end{split}
\end{align}
where we write $2^i_\ast = [ 2^i \ud \ell (2^i)]$ for convenience. 
Let us examine the second term in the right handside more closely. We use the fact that for every 
$\beta>0$ there exists $x_\beta$ such that $(1-x)> \exp\big(-x/(1-\beta)\big)$ holds for all 
$0<x<x_\beta$. 
Choose $k_1$ such that $\rho\big(2^{k}_\ast \big)<1 \wedge x_\beta$ and 
$\ud \ell(2^k) \ge 2^{-\vep_1 k}$ for all $k\ge k_1$. This choice is permissible 
since $\ud \ell$ is a non-increasing slowly varying function \cite[Proposition 1.3.6 (v)]{bingham:goldie:teugels:1987}. 
Then for $k\ge k_1$ 
\begin{align*}
 \prod_{j=k}^{k+i} \big(
1-\rho \big(2^{j}_\ast \big) \big)^{-1/2} 
&= \exp \Big(-2^{-1} \sum_{j=k}^{k+i} \log \big(1-\rho (2^{j}_\ast) \big) \Big) \\
&\le \exp \bigg( 2^{-1} \sum_{j=k}^{k+i} \rho (2^{j}_\ast )/(1-\beta) \bigg) \\
&\le \exp \bigg( 2^{-1} \sum_{j=k}^{k+i} \rho ([2^{j(1-\vep_1)}] )/(1-\beta) \bigg) \\
&\le \wh c_2^{1/2} \ud h^{-1/2}(2^{k+i}). 
\end{align*}
For the term $\sigma(2_\ast^{k+i})$, we apply the inequality $\sigma_n^2 = nh(n) \le n\ov h(n)$ and 
exploit the relation between $\ov h$ and $\ud h$ from expressions in \eqref{exp:ovh:udh:respective}.
We deduce 
\begin{align*}
 \sigma(2^{k+i}_\ast) &\le 2^{(k+i)/2}\, \ud \ell^{1/2}(2^{k+i})\, \ov h^{1/2}\big(2^{k+i}\ud \ell(2^{k+i}) \big) \\
& \le 2^{(k+i)/2}\, \ud \ell^{1/2}(2^{k+i})\,\ov h^{1/2}(2^{k+i}) \\
& =2^{(k+i)/2} \ud \ell^{1/2}(2^{k+i})\, \ud h^{-1/2}(2^{k+i})\,(\wh c_1 \wh c_2)^{1/2} \sigma_1 \exp\bigg(
-\frac{\beta}{2(1-\beta)} \sum_{j=1}^{k+i}{\rho([2^{(1-\vep_1)j}])}
\bigg) \\
& =:2^{(k+i)/2} \ud \ell^{1/2}(2^{k+i}) \,\ud h^{-1/2}(2^{k+i}) c_1(k),
\end{align*}
where we utilize the fact that $\ov h$ is non-decreasing and $\ud \ell$ is non-increasing.  
Since $\sum_{i=1}^{[\log n]} \rho([2^{(1-\vep_1)i}])\to \infty$ $(n\to\infty)$, we may take $c_1(k)$ arbitrarily small 
by choosing $k$ sufficiently large, noticing that $c_1(k)$ is non-increasing w.r.t. $k$. 
Substituting these bounds into the second term of \eqref{eq:sigma:lower:recussion}, we obtain  
\begin{align*}
& 2^{1/2} \sum_{i=0}^{r-k-1} 2^{-i/2} \prod_{j=k}^{k+i} \big(1-\rho (2^{j}_\ast) \big)^{-1/2}\, \sigma(2^{k+i}_\ast) \\
&\le (2\wh c_1)^{1/2} c_1(k) 2^{k/2} \sum_{i=0}^{r-k-1} \ud\ell^{1/2}(2^{k+i}) \ud h^{-1}(2^{k+i}) \\
&\le (2 \wh c_1)^{1/2} c_1(k) 2^{k/2}\, \ud h(2^k)\, \sum_{i=0}^{r-k-1} q_0^{1/2}(2^{k+i}) \\ 
&\le  c_0(k) \,2^{k/2 }\,\ud h(2^k),  
\end{align*}
where we may let $0<c_1=c_1(k) <1$ if $k\ge k_2$ for some sufficiently large $k_2$.
Thus the inequality \eqref{eq:sigma:lower:recussion} implies that 
\[
 \sigma(2^k) \le 2^{(k-r)/2} \sigma(2^r) \prod_{i=k}^{r-1}\big(
1-\rho(2^i_\ast)
\big)^{-1/2} + c_1\, 2^{k/2}\,\ud h(2^k)
\]
holds for $k\ge k_1 \vee k_2 =:k_3$. 
Recalling  
that $\sigma(2^k)=2^{k/2} h^{1/2}(2^k)$ for 
a slowly varying function $h$, 
we can absorb the second term to deduce the existence of a positive constant 
$ c_2 = c_2 (X,(\vep_i),q_0,k)$ such that for every $r>k \ge k_3$,
\[
 \sigma(2^k) \le c_2\, 2^{(k-r)/2} \sigma(2^r) \prod_{i=k}^{r-1} 
\big(1-\rho(2^i_\ast) \big)^{-1/2}. 
\]
Rearranging this inequality yields
\begin{align*}
 \sigma^2(2^r) &\ge  c_2^{-2} \,2^{r-k} \sigma^2(2^k) \prod_{i=k}^{r-1} \big(
1-\rho(2^i_\ast)\big) \\
& \ge  c_3\,2^r \exp\bigg(
\sum_{i=k_0}^{r-1} \log \big(
1-\rho(2^i_\ast )
\big)\bigg), \qquad r\ge k_3,
\end{align*}
where $ c_3 = c_3(X, (\vep_i),k_3,q_0)$. 
By interpolating between $n$ and its nearest binary power, we obtain
\begin{align}
\label{pf:ineq:lower:lem:23:pre}
 \sigma^2(n) \ge c_4 n \exp\bigg(
\sum_{i=k_0}^{[\log n]}
 \log \big(
1-\rho(2^i_\ast )
\big)
\bigg)
\end{align}
for every $n\ge 2^{k_3}$ with some $c_4=c_4(X, (\vep_i),k_3,q_0)>0$.   
Finally, in order for \eqref{pf:ineq:lower:lem:23:pre} to hold across the entire range $2^{k_0}\le n<2^{k_3}$, we put 
\[
 \wt c_2 = \min\Big\{
c_4,\,\min_{2^{k_0}\le j<2^{k_3}}\sigma^2(j)j^{-1}
\Big\}
\]
since the exponential term $\exp(\cdot)\le 1$ in \eqref{pf:ineq:lower:lem:23:pre}.
\end{proof}

Let $p\in (2,4]$ and let $q_1 (n)$ be a non-increasing slowly varying function such that $\sum_{i=1}^\infty q_1 (2^i)<\infty$. 
We have the following bound for $\E|S_n|^p$.

\begin{lemma}
\label{lem:inq:4th:moment}
 Suppose that $(X_t)$ satisfies \eqref{condition:Ibragimov} and $\E|X_0|^p<\infty$. 
Then there exists a positive constant $\wt c_3=\wt c_3 (\rho,p,q_1)$ such that 
\begin{align}
\label{inq:4th:moment}
 \E|S_n|^p \le \wt c_3 \bigg(n \exp\Big( p 2^{p-2}
\sum_{i=1}^r \rho^{2/p}\big( 2_\ddag^{i/p} \big)
\Big)\E|X_0|^p+ \sigma_n^p \bigg),
\end{align}
where $r=[\log n]$ and $2_\ddag^{i/p}=[2^{i/p} q_1(2^i)],\,i\ge 1$. 
\end{lemma}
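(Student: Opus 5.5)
We follow the recursive doubling scheme behind Lemma 3 of Peligrad \cite{peligrad:1987}, the same device used in Lemmas \ref{lemm:upper:bound} and \ref{lem:lower:bound}. The change is in the gap: instead of a power $m^{1-\vep}$ we separate the two blocks by $p_m=[m^{1/p}q_1(m)]$. Put $a_m=\|S_m\|_p$. For $m=2^{j-1}$ we write
\[
S_{2m}=S_m(0)+S_m(m+p_m)+R,
\]
where $R$ collects at most $2p_m$ summands. Then $\|R\|_p\le 2p_m\|X_0\|_p$, and
\[
a_{2m}\le \|S_m(0)+S_m(m+p_m)\|_p+2p_m\|X_0\|_p .
\]

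\textbf{Step 1 (one-step inequality).} We expand $\E|U+V|^p$ with $U=S_m(0)$ and $V=S_m(m+p_m)$. For $p\in(2,4]$ we use the elementary inequality
\[
|u+v|^p\le |u|^p+|v|^p+c_p\big(|u|^{p-1}|v|+|u||v|^{p-1}\big)+c_p\big(u^2|v|^{p-2}+|u|^{p-2}v^2\big),
\]
or the Peligrad form with constant $2^{p-2}$. The cross terms are handled by the $\rho$-mixing covariance inequality applied to functions of $U$ and of $V$. Combined with H\"older, this gives
\[
\E|U|^{p-1}|V|\le \rho^{2/p}(p_m)\,a_m^p+(\text{terms in }\sigma_m).
\]
Here the exponent $2/p$ arises from interpolating the $L_2$ correlation bound between $L_2$ and $L_{p/(p-1)}$, as in Peligrad. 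The mixed second-moment terms are bounded by $c\,a_m^{p-2}\sigma_m^2$. The resulting recursion is
\[
a_{2m}^p\le 2\big(1+p2^{p-2}\rho^{2/p}(p_m)\big)a_m^p+c\,a_m^{p-2}\sigma_m^2+c\,p_m^p\E|X_0|^p ,
\]
after absorbing $\|R\|_p$ via $(x+y)^p\le(1+\eta)x^p+c_\eta y^p$ with a summable choice of $\eta$. Since $p_m^p\le m\,q_1^p(m)$ and $q_1$ is summable along $2^i$, the error term is of order $2^{j}q_1(2^j)\E|X_0|^p$.

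\textbf{Step 2 (handling the $\sigma_m$ term).} Young's inequality gives
\[
a_m^{p-2}\sigma_m^2\le \vep_m a_m^p+c\,\vep_m^{-(p-2)/2}\sigma_m^p .
\]
We choose $\vep_m$ summable over dyadic $m$, for instance $\vep_m=q_1(m)$. This keeps the product factor finite. The $\sigma_m^p$ contributions, summed with weights $2^{j-i}$, are controlled by $\sigma_{2^j}^p$: since $\sigma^2_n=nh(n)$ with $h$ slowly varying, $\sigma_{2^i}^p2^{j-i}\le c\,\sigma_{2^j}^p\,2^{-(j-i)(p/2-1)}$ up to slowly varying factors, and the geometric decay beats the slowly varying ratios and the $\vep_m^{-(p-2)/2}$ factors. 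This is where $p>2$ is used.

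\textbf{Step 3 (iteration and interpolation).} We iterate the recursion from $m=1$ to $2^{j-1}$, using $1+x\le e^x$, to obtain
\[
a_{2^j}^p\le c\,2^j\exp\Big(p2^{p-2}\sum_{i\le j}\rho^{2/p}(2^{i/p}_\ddag)\Big)\Big(\E|X_0|^p\sum_i q_1(2^i)\Big)+c\,\sigma_{2^j}^p .
\]
We then pass to general $n$ through its binary expansion and Minkowski's inequality, as at the end of Lemma \ref{lemm:upper:bound}. The exponential factor is non-decreasing and slowly varying, and the $\sigma$ pieces combine into $c\,\sigma_n^p$ using the regular variation of $\sigma_n$.

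\textbf{Main obstacle.} The delicate point is Step 1: obtaining the cross-term coefficient exactly $p2^{p-2}\rho^{2/p}$ with gap $p_m=[m^{1/p}q_1(m)]$, while ensuring that every other error contributes only summable multiplicative factors. If $q_1$ is not small enough, the gap term $p_m^p\E|X_0|^p$ does not sum. We will first try Peligrad's exact inequality and bookkeeping, replacing each $m^{\vep}$ by $q_1(m)$, and check summability term by term.
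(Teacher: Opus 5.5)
Your Step 1 does recover the paper's intermediate estimate $\E|U+V|^p\le 2\big(1+p2^{p-2}\rho^{2/p}(\ell)\big)a_m^p+p2^{p-1}\sigma_m^2a_m^{p-2}$, provided you use the inequality of Lemma \ref{aux:lem:ineq}. Your other variant does not work: for its extra $u^2|v|^{p-2}$ terms, the analogous H\"older/mixing argument gives $\rho^{2(p-2)/p}a_m^p$, not $c\,a_m^{p-2}\sigma_m^2$. Note also that it is $p\le 4$ that yields $(\E|U|^{p/2})^2\le\sigma_m^p$.

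The genuine gap is Step 2. Young's inequality with $\vep_m=q_1(m)$ attaches the factor $c\,q_1(m)^{-(p-2)/2}$ to $\sigma_m^p$. When you iterate up to $2^j$, the last step $m=2^{j-1}$ alone contributes $c\,q_1(2^{j-1})^{-(p-2)/2}\sigma^p(2^{j-1})$, i.e.\ a term of order $q_1(2^j)^{-(p-2)/2}\sigma^p(2^j)$. Summability of $q_1(2^i)$ forces $q_1(2^j)\to0$, so this is not $O(\sigma^p(2^j))$. The geometric decay you invoke is in $j-i$ and cannot absorb a factor that grows with $i$ itself. No summable $\vep_m$ depending only on $m$ avoids this. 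A constant $\vep$ instead multiplies the first term by $(1+c\vep)^j=n^{c'\vep}$, which is precisely Peligrad's $n^{1+\vep}$ that the lemma is meant to remove. As written, your argument therefore gives only
$\E|S_n|^p\le c\big(n\exp(\cdots)\E|X_0|^p+q_1(n)^{-(p-2)/2}\sigma_n^p\big)$.
This is useless in Theorem \ref{thm:main1}, where $\E|S_{n,1}/\sigma_n|^{2+\delta}$ must stay bounded.

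The missing idea is to iterate at the level of norms, not $p$-th powers. Since $(A+B)^p\ge A^p+A^{p-2}B^2$ and $4\cdot 2^{(p-2)/p}\ge p$ for $p\in(2,4]$,
\[
2(1+x)a_m^p+p2^{p-1}\sigma_m^2a_m^{p-2}\le\big\{2^{1/p}(1+x)^{1/p}a_m+2^{(p+1)/2}\sigma_m\big\}^p,\qquad x\ge0.
\]
This gives the linear recursion $a_{2m}\le 2^{1/p}\big(1+p2^{p-2}\rho^{2/p}(\ell)\big)^{1/p}a_m+2^{(p+1)/2}\sigma_m+2\ell a_1$. Here $\sigma_m$ enters with a fixed constant, and the gap term $2\ell a_1$ enters additively. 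So no $\eta$-absorption is needed; yours works only if $\eta_m\ge c\,q_1(m)$, because $c_\eta\asymp\eta^{-(p-1)}$. After iteration, $\sum_{i<r}2^{(r-i)/p}\sigma(2^i)\ll\sigma(2^r)$ follows from Lemma 2 of \cite{peligrad:1987}, since $1/p<1/2$. Raising the final norm bound to the $p$-th power then produces $\exp\big(p2^{p-2}\sum_{i\le r}\rho^{2/p}(2_\ddag^{i/p})\big)$.

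Alternatively, you can keep your $p$-th power scheme if the Young parameter depends on the distance to the target scale: take $\vep=2^{-\theta(j-i)}$ at step $i$, with $0<\theta<1$. Then $\sum_{i<j}\vep$ is bounded uniformly in $j$, while $\vep^{-(p-2)/2}2^{-(j-i)(p/2-1)}=2^{-(j-i)(p/2-1)(1-\theta)}$ still decays geometrically.
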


\begin{proof}
Let $\ell\in \N$ and let $\|X\|_p$ denote the norm $\|X\|_p=(\E|X|^p)^{1/p}$. 
For convenience, we write $a_m= \big(\E |S_m|^p \big)^{1/p}=\|S_m\|_p$. 
Obviously, we have 
\begin{align}
\label{ineq:pth:norm}
 \|S_k(2m)\|_p \le \|S_k(m)+S_{k+\ell+m}(m)\|_p +2 \ell a_1. 
\end{align}
We apply the elementary inequality (Lemma \ref{aux:lem:ineq})
\[
 (x+y)^p \le x^p + y^p + p 2^{p-2} (x^{p-1}y + x y^{p-1}),\quad p\ge 2,\quad x,y\ge 0
\]
to $\|S_k(m)+S_{k+\ell+m}(m)\|_p^p$ and obtain  
\begin{align}
\label{ineq:pth:moment}
\begin{split}
& \E|S_k(m)+S_{k+\ell+m}(m)|^p \\
& \le 2a_m^p+ p2^{p-2} 
\E\big[
|S_k(m)|^{p-1}|S_{k+\ell+m}(m)|+|S_k(m)||S_{k+\ell+m}(m)|^{p-1}
\big].
\end{split}
\end{align}
By Minkowski's inequality and the definition of $\rho$-mixing, we deduce  
\begin{align*}
 \E|S_k(m)|^{p-1}|S_{k+\ell+m}(m)| 
& \le \|S_k(m)\|_p^{p-2}\|S_k(m)\cdot S_{k+\ell+m}(m)\|_{p/2} \\
& \le a_m^{p-2}\big(\rho(\ell)a_m^p +\sigma_m^p\big)^{2/p} \\
& \le \rho^{2/p}(\ell) a_m^p +\sigma_m^2 a_m^{p-2},  
\end{align*}
where in the second step we exploit the stationarity and apply
\begin{align*}
 \E|S_k(m)S_{k+\ell+m}(m)|^{p/2} 
&= \E\big[\big(|S_k(m)|^{p/2}-\E|S_k(m)|^{p/2}\big)\big(|S_{k+\ell+m}(m)|^{p/2}-\E|S_{k+\ell+m}(m)|^{p/2}\big)\big] \\
&\qquad + (\E|S_k(m)|^{p/2})^{2} \\
& \le \rho(\ell) \E|S_k(m)|^p + (\E|S_k(m)|^2)^{p/2}. 
\end{align*}
Similarly, we obtain  
\begin{align*}
 \E |S_k(m)||S_{k+\ell+m}(m)|^{p-1} \le \rho^{2/p}(\ell) a_m^p + \sigma_m^2 a_m^{p-2}. 
\end{align*}
Substituting these results into \eqref{ineq:pth:moment} yields 
\begin{align*}
 \E|S_k(m)+S_{k+\ell+m}(m)|^p & 
\le 2 a_m^p + p2^{p-1} \big( \rho^{2/p}(\ell) a_m^p +\sigma_m^2 a_m^{p-2} \big) \\
& \le \big\{
2^{1/p} \big(1+ p2^{p-2} \rho^{2/p}(\ell)\big)^{1/p} a_m + 2^{(p+1)/2} \sigma_m 
\big\}^p. 
\end{align*}
This, together with \eqref{ineq:pth:norm}, yields the inequality 
\[
 a_{2m} \le 2^{1/p} \big(1+p 2^{p-2} \rho^{2/p}(\ell)\big)^{1/p} a_m + 2^{(p+1)/2} \sigma_m + 2\ell a_1. 
\]
We put $m=2^{r-1}$ and $\ell =[2^{(r-1)/p} q_1(2^{r-1})]= 2^{(r-1)/p}_\ddag$. 
Then 
\[
 a(2^r) \le 2^{1/p} \big(1+p 2^{p-2} \rho^{2/p} (2_\ddag^{(r-1)/p})\big)^{1/p}
a(2^{r-1}) + 2^{(p+1)/2}\sigma(2^{r-1}) + 2\cdot 2_\ddag^{(r-1)/p} a_1. 
\]
By recursion we obtain 
\begin{align}
\label{ineq:pf:lemma:4th}
\begin{split}
 a(2^r) &\le 2^{(r-k)/p} \prod_{i=k}^{r-1} \big(1+p2^{p-2} \rho^{2/p}(2_\ddag^{i/p})\big)^{1/p} a(2^k) \\
& \quad + \sum_{i=k}^{r-1} 2^{(r-i-1)/p} \prod_{l=i+1}^{r-1} \big(1+p2^{p-2} \rho^{2/p}(2_\ddag^{l/p})\big)^{1/p}
\Big(2^{(p+1)/2}\sigma(2^{i})+2\cdot 2_\ddag^{i/p} a_1 \Big).
\end{split} 
\end{align}
On the right-hand side, we exploit the inequality (\cite[Lemma 2]{peligrad:1987}): 
\[
\sigma(2^i) \le 2^{(r-i)(\vep -1)/2} \sigma(2^r) + c_1 \sigma_1,\quad  i<r,\quad 0<\vep<1, 
\]
where $c_1=c_1(\rho,\vep)$, and we specifically choose $0<\vep< (p-2)/(2p)$, yielding 
\begin{align*}
& \sum_{i=k}^{r-1} 2^{(r-i-1)/p} \prod_{l=i+1}^{r-1}
\big(1+p2^{p-2} \rho^{2/p}(2_\ddag^{l/p})\big)^{1/p} 2^{(p+1)/2} \sigma(2^i) \\
& \le 2^{(p+1)/2} \sum_{i=1}^{r-1} 2^{(r-i-1)/p} \Big\{
c_2 2^{(r-i)\vep /2}\cdot 2^{(r-i)(\vep-1)/2}\sigma(2^r)+ \prod_{l=i+1}^{r-1}
\big(1+p 2^{p-2} \rho^{2/p}(2^{l/p}_\ddag )\big)^{1/p} c_1 \sigma_1
\Big\} \\
& \le c_3 \bigg\{ \sum_{i=k}^{r-1} 2^{(r-i)(1/p-1/2+\vep)}\sigma(2^r) + \prod_{l=k+1}^{r-1}
\big(
1+p 2^{p-2} \rho^{2/p}(2^{l/p}_\ddag)
\big)^{1/p} c_1 \sigma_1 \sum_{i=1}^{r-1} 2^{(r-i-1)/p} \bigg\} \\
& \le c_4 \bigg\{
\sigma(2^r)+ \prod_{l=k+1}^{r-1} \big(
1+p 2^{p-2} \rho^{2/p}(2^{l/p}_\ddag)
\big)^{1/p} 2^{(r-k)/p}\sigma_1 
\bigg\} \\
& \le c_5 \sigma(2^r).
\end{align*}
Here, in the second step, since $\rho(\ell) \to 0$, we can choose $k$ large enough such that for all $i\ge k$,  
\begin{align}
\label{ineq:product:rho}
 \big(1+ p2^{p-2} \rho^{2/p}(2^{i/p}_\ddag )\big)^{1/p} \le 2^{\vep/2}, 
\end{align}
ensuring that $c_5=c_5(\rho,\vep)$. In the final step, we use the fact that 
\begin{align*}
 \prod_{l=k+1}^{r-1}\big(1+ p2^{p-2} \rho^{2/p}(2^{l/p}_\ddag ) \big)^{1/p}
2^{(r-k)/p} &\le 2^{(r-k)(\vep/2+1/p)} \\
& \le c_6 2^{(r-k)(1/2-\vep')}  
\end{align*} 
for some $\vep'\in (0,1/2)$, as $\vep/2 + 1/p < 1/2$. Constant $c_6$ also depends on $(\rho,p,\vep)$. 

Moreover, we observe that 
\begin{align*}
& \sum_{i=k}^{r-1} 2^{(r-i-1)/p} \prod_{l=i+1}^{r-1} 
\big(1+p 2^{p-2}\rho^{2/p}(2^{l/p}_\ddag )\big)^{1/p} 2 \cdot 2^{i/p}_\ddag a_1 \\
& \le  \prod_{l=k+1}^{r-1} \big(
1+p 2^{p-2} \rho^{2/p}(2^{l/p}_\ddag)
\big)^{1/p} 2^{r/p+1} \sum_{i=1}^{r-1} q_1 (2^i) a_1 \\
& \le  c_7 2^{r/p} \prod_{l=k+1}^{r-1} \big(
1+p 2^{p-2} \rho^{2/p}(2^{l/p}_\ddag)
\big)^{1/p}a_1,
\end{align*}
where we utilized $\sum_{i=1}^\infty q_1(2^i)<\infty$ and $c_7=c_7(q_1)$.  

The collection of these bounds yields that for all $r\ge k$ such that $k$ satisfies \eqref{ineq:product:rho},  
\begin{align}
\label{ineq:a2^4}
 a(2^r) \le c_8 \bigg\{
2^{r/p} \prod_{i=k}^{r-1}\big(
1+p 2^{p-2} \rho^{2/p}(2_\ddag^{i/p}) \big)^{1/p} a_1 + \sigma(2^r)
\bigg\} 
\end{align}
holds, where the constant $c_8$ depends on $(\rho,q_1,\vep)$, but $\vep$ depends only on $p$, i.e., $\vep \in (0,(p-2)/(2p))$. 
Hence, by replacing $c_8$ with 
\[
 c_9= \max \big\{ c_8,\,\max_{1\le l \le k} 2^{-l/p} a(2^l) / a_1 \big\},
\]
we obtain \eqref{ineq:a2^4} for all $r\ge 1$. 
Now, by writing $n$ in binary form and evaluating the error, we have 
\begin{align*}
 a_n \le c_{10} \bigg(n^{1/p} \prod_{i=1}^r \big(
1+p2^{p-2} \rho^{2/p}(2_\ddag^{i/p})
\big)^{1/p}a_1 +\sigma_n \bigg), 
\end{align*}
where we again notice that $\sigma_n=n^{1/2}h(n)$ for a slowly varying function $h$, so that 
$\sigma(2^r) \sim 2^{1/2} \sigma(2^{r-1})$ as $n\to\infty$.  
Finally, it suffices to observe that 
\begin{align*}
\prod_{i=1}^r \big(
1+ p2^{p-2} \rho^{2/p}(2^{i/p}_\ddag)
\big)^{1/p} & \le \exp\bigg( 1/p \sum_{i=1}^r \log \big(1+ p2^{p-2} \rho^{2/p}(2_\ddag^{i/p})\big) \bigg) \\
& \le \exp\big( 2^{p-2} \sum_{i=1}^r \rho^{2/p}(2^{i/p}_\ddag) \big).  
\end{align*}
Raising both sides of the bound for $a_n$ to the $p$-th power and recalling $a_n^p = \E|S_n|^p$ yields \eqref{inq:4th:moment}.
\end{proof}

\begin{remark}
\label{rem:for:lem:inq:4th:moment}
Since the function $q_1$ is flexible, we may choose $q_1$ such that $q_1(2^i) \ge 2^{-\vep/p \cdot i}$ holds for 
any small $\vep>0$ uniformly in $i\in \N$. With this in mind, we put $p=2+\delta$ in Eq. \eqref{inq:4th:moment}
for a sufficiently small $\delta>0$. Then the exponential term in Eq. \eqref{inq:4th:moment} is bounded 
as 
\begin{align*}
\exp \Big(
p 2^{p-2} \sum_{i=1}^r \rho^{2/p}(2_\ddag^{i/p}) 
\Big) &\le \exp \Big(
p 2^{p-2} \sum_{i=1}^r \rho^{2/p}(2^{(1-\vep)/p i})
\Big) \\
& \le c \exp \Big(
p^2 2^{p-2}/(1-\vep) \sum_{i=1}^r \rho^{2/p}(2^{ i})
\Big) \\
& \le c \exp \Big(
(2+\delta)^2 2^{\delta}/(1-\vep) \sum_{i=1}^r \rho^{2/(2+\delta)}(2^{ i})
\Big),
\end{align*}
where $r=[\log n]$, and we exploited a simple relation between sums and integrals 
together with a change of variables. By choosing $\delta$ and $\vep$ sufficiently small, the coefficient 
can be bounded by $5$. Thus, we obtain a version of \eqref{inq:4th:moment} as 
\begin{align*}
 \E|S_n|^{2+\delta} \le c \bigg(
n \E|X_0|^{2+\delta} \exp \Big(5 \sum_{i=1}^{[\log n]} \rho^{2/(2+\delta)}(2^{ i})\Big) + \sigma_n^{2+\delta}
\bigg). 
\end{align*}
Later we will use this inequality. 
The result would be a precise version of Lemma 2 of \cite{shao:1989} $($cf. Lemma 1 in \cite{shao:1988}$)$.
\end{remark}

\begin{proof}
As remarked in the proof of Theorem 1.1 of \cite{peligrad:1987}, for the strictly stationary series with 
\eqref{condition:Ibragimov}, the CLT is equivalent to the uniform integrability of $(S_n^2/\sigma_n^2)$. 
In \cite{peligrad:1982} it was proved the uniform integrability for the case $\sum_{i=1}^\infty \rho(2^i)<\infty$.
We shall prove the uniform integrability of 
$(S_n^2/\sigma_n^2)$ under the condition that $\sum_{i=1}^\infty \rho(2^i)=\infty$ with our condition. Recall that we assume 
\[
 g\Big(
n^{1/2} \big(h_\delta(n)h_d^{(2+\delta)/4}(n) \big)^{-1/\delta}\Big) \ge c h_d(n),\quad c>0 
\]
for every $n$ sufficiently large. For convenience we put $c=1$ without loss of generality. 
Set a threshold $T=T(n)$ such that
\[
 T = g^{\rm inv} \big(h_d(n)\big),
\] 
and separate each element of $(X_t)$ in the following way:
$X_t=X_{t,1}+X_{t,2}$ where 
\begin{align*}
 X_{t,1} &= X_t {\bf 1}(|X_t|\le T)- \E X_t {\bf 1}(|X_t|\le T), \\
 X_{t,2} &= X_t{\bf 1}(|X_t| > T) -\E X_t {\bf 1}(|X_t| > T) 
\end{align*}
and furthermore, we write 
\begin{align*}
& S_{n,1}= \sum_{t=1}^n X_{t,1}\quad \text{and}\quad S_{n,2}=\sum_{t=1}^n X_{t,2},  \\
& \sigma_{n,1}^2 = \var (S_{n,1})\quad \text{and}\quad \sigma_{n,2}^2 =\var (S_{n,2}). 
\end{align*}
By Lemma \ref{lemm:upper:bound} and the fact that $g(x)$ is non-decreasing function, 
we have
\[
 \sigma_{n,2}^2 \le \wt c_1 (n/g(T)) \E [X_0^2 g(|X_0|){\bf 1}(|X_0|>T)] \exp\Big(
\sum_{i=1}^r \rho (2_\dag^i)
\Big),
\]
where $\wt c_1$ and $2^{i}_\dag = [2^i \ov \ell (2^i)]$ are defined in Lemma \ref{lemm:upper:bound}, 
and $r=[\log n]$.

Recalling that the choice of $q$ and $q_0$ are flexible (Remark after Lemma \ref{lem:lower:bound})
 and the relation $\ov h (n)/(\wh c_1 \E X_0^2) \le \wh c_2 \ud h^{-1}(n)$, we may choose $q$ and $q_0$ 
such that $\ov \ell(n) \le \ud \ell(n)$ follows. 
Then 
\begin{align}
\label{ineq:up:low:ell}
 \exp\Big( \sum_{i=1}^r \rho (2_\dag^i)\Big) &\le \exp\Big(
\sum_{i=1}^{k_0-1} \rho (2_\dag^i)- \sum_{i=k_0}^r \log \big\{ 1-\rho\big(2_\ast^i\big) \big\}
\Big)\\
& \ll \exp\Big(
\sum_{i=1}^{k_0-1} \rho (2_\dag^i) \Big) g(T)^{1/2}, \nonumber
\end{align}
where $2_\dag^i \ge 2_\ast^i $.
By \eqref{ineq:up:low:ell} and the definition of $T$ it follows that for every $n$ sufficiently large 
\begin{align*}
 \sigma_{n,2}^2 
&\le (\wt c_1/\wt c_2) \exp\big( \sum_{i=1}^{k_0-1} \rho(2_\dag^i) \big)\, \sigma_n^2\, \E[X_0^2 g(|X_0|){\bf 1}(|X_0| >T)] \\
&\le c \sigma_n^2 \E[X_0^2 g(|X_0|){\bf 1}(|X_0| >T)]. 
\end{align*}
From this and the fact that $T\to\infty$ when $n\to\infty$, we obtain 
\begin{align}
\label{dominate:sigma2:sigma}
 \sigma_{n,2}=o(\sigma_n),\quad \text{as}\quad n\to\infty,
\end{align}
so that 
\[
 \sigma_{n,1} \sim \sigma_n\quad \text{as}\quad n\to\infty.
\]
Therefore we focus on $\sigma_{n,1}$.
By applying Lemma \ref{lem:inq:4th:moment} to the sequence $(X_{t,1})$ 
with $p=2+\delta$ (see also Remark \ref{rem:for:lem:inq:4th:moment}), we can find a constant 
$c=c(X,p,q_1)$ such that for every $n\ge 1$
\[
 \E |S_{n,1}|^{2+\delta} \le c \big(
n h_\delta (n)\E |X_0|^{2+\delta}{\bf 1}(|X_0| \le T) +\sigma_{n,1}^{2+\delta}
\big).
\]
Using Lemma \ref{lem:lower:bound}, we have 
\begin{align*}
 \E |S_{n,1}/\sigma_n|^{2+\delta} &\le c /\wt c_2^{1+\delta/2} \bigg\{
\frac{T^\delta \E|X_0|^2}{n^{\delta/2} h_\delta^{-1} h_d^{-(2+\delta)/4}}+1
\bigg\} \\
& =  c/\wt c_2^{1+\delta/2} 
\Bigg\{
\bigg(
\frac{T}{n^{1/2} \big( h_\delta 
h_d^{(2+\delta)/4} \big)^{-1/\delta}} \bigg)^{\delta}
\E |X_0|^2
+1
\Bigg\}
\end{align*}
Now recalling
\[
 g\Big(
n^{1/2} \big(h_\delta(n)h_d^{(2+\delta)/4}(n) \big)^{-1/\delta}\Big) \ge h_d(n) =g(T)
\]
we observe that 
\[
 T^\delta / \big\{ n^{\delta/2} \cdot h_\delta h_d^{(2+\delta)/4} \big\}
\]
is uniformly bounded on $n\ge n_0$ for some $n_0\in \N$.
Therefore, we conclude 
\begin{align}
\label{bound:sup:sn/sigma4}
 \sup_n \E |S_n/\sigma_n|^{2+\delta} <\infty.
\end{align}
Now from \eqref{dominate:sigma2:sigma} and \eqref{bound:sup:sn/sigma4} we can conclude that 
$(S_n^2/\sigma^2_n)$ is uniformly integrable. 
\end{proof}

\begin{proof}[Proof of Corollary \ref{cor:clt:log:mixing} $(b)$.]
In view of Theorem \ref{thm:main1}, the proof reduces to finding a non-decreasing function $g$ satisfying 
\eqref{condi:ii:improved} under condition $(b2)$ and the asymptotic relation 
\begin{align}
\label{asymp:g(n):col2}
    g(x) \sim \exp \big[ 2\alpha (2\log^+ x)^{1-\beta}/(1-\beta) \big].
\end{align}
To check \eqref{condi:ii:improved} 
we first evaluate $h_\delta(n)$ and $h_d(n)$ for large $n$. 
Applying the Taylor expansion 
\[
    -\log (1-x) = x + \frac{x^2}{2} + \frac{x^3}{3} + \frac{x^4}{4} + \cdots \le x + \frac{x^2}{2}\frac{1}{1-x}, \quad x \in (0,1),
\]
to $h_d(n)$ and noting that $\rho(2^i_\ast) < 1$ for $i \ge k_0$, we observe that 
\[
    h_d(n) \le \exp \bigg[ 2 \sum_{i=k_0}^{[\log n]} \Big(\rho(2^i_\ast) + \frac{\rho(2^i_\ast)^2}{2}\frac{1}{1-\rho(2^i_\ast)} \Big) \bigg], \quad n \ge 2^{k_0}.
\]
Here and what follows, we put $\rho(n)=\alpha (\log n)^{-\beta},\,n\ge 2^{k_0}$ without loss of generality 
since we need asymptotic rates of $h_\delta (n)$ and $h_d(n)$ as $n\to\infty$ and their coefficient constants do not matter. 
For sufficiently large $i$, i.e., $i \ge k_1$ for some $k_1 \in \N$, 
\begin{align}
    \rho(2^i_\ast) &= \rho([2^i \ud \ell (2^i)]) \nonumber \\
    &= \alpha \big(\log (2^i \ud \ell(2^i)-1)\big)^{-\beta} \nonumber \\
    &= \alpha \big\{ i + \log \ud \ell(2^i) + \log \big(1-2^{-i}\ud \ell^{-1}(2^i) \big) \big\}^{-\beta} \nonumber \\
    &\le \alpha i^{-\beta} (1-ci^{-\beta})^{-\beta}, \label{evluation:logell}
\end{align}
where the justification for the last inequality will be given at the end of the proof.

Using the inequality $(1-x)^{-\beta} \le 1+c x$ for some $c>0$ and $x \in (0,1)$, we further obtain 
\[
    \rho(2^i_\ast) \le \alpha i^{-\beta} + ci^{-2\beta}, \quad i \ge k_1,
\]
which, together with $\beta \in (1/2,1)$, yields 
\begin{align*}
    2 \sum_{i=k_0}^{[\log n]} \Big(\rho + \frac{\rho^2}{2}\frac{1}{1-\rho}\Big) 
    &\le 2 \alpha \sum_{i=k_1}^{[\log n]} i^{-\beta} + c \sum_{i=k_1}^{[\log n]} i^{-2\beta} + c \\
    &\le 2\alpha \sum_{i=k_1}^{[\log n]} i^{-\beta} + c.
\end{align*}
Thus, we obtain 
\begin{align}
\label{ineq:evalu:hdn}
    h_d(n) \le c \exp\Big( 2\alpha \sum_{i=k_0}^{[\log n]} i^{-\beta}\Big) \le c \exp \big( 2\alpha (\log n)^{1-\beta}/(1-\beta)\big). 
\end{align}
In a similar manner, we deduce 
\begin{align}
\label{ineq:evalu:hdelta}
    h_\delta^{1/\delta}(n) \le c \exp\Big( c (\log n)^{1-2\beta/(2+\delta)} \Big).
\end{align} 

Next, we specify the function $g$ as
\begin{align}
\label{def:g(n):col2}
    g(x) = \exp \Big( 2\alpha \sum_{i=1}^{[\log x^2]} i^{-\beta} \Big) 
    \sim \exp\Big( 2\alpha (2\log n)^{1-\beta}/(1-\beta) + c \Big), 
\end{align}
where the middle expression is convenient for our purpose. Then 
\begin{align*}
    g\Big( n^{1/2}\big(h_\delta(n)h_d^{(2+\delta)/4}(n)\big)^{-1/\delta}\Big) 
    &= \exp \Big( 2\alpha \sum_{i=1}^{[\log n-e_n]} i^{-\beta} \Big) \\
    &= \exp \Big( 2\alpha \sum_{i=1}^{[\log n]} i^{-\beta} - 2\alpha \sum_{i=[\log n-e_n]+1}^{[\log n]} i^{-\beta} \Big) \\
    &=: \exp \Big( 2\alpha \sum_{i=1}^{[\log n]} i^{-\beta} - E_n \Big)\\
    &\sim c \exp \Big( 2\alpha (\log n)^{1-\beta}/(1-\beta) - E_n \Big), 
\end{align*}
where 
\[
 e_n = 2/\delta \log h_\delta(n) +(2+\delta)/(2\delta) \log h_d(n).
\]
We will show that $E_n$ is negligible. This fact, together with \eqref{ineq:evalu:hdn}, ensures that $g$ defined in \eqref{def:g(n):col2} satisfies both \eqref{condi:ii:improved} and \eqref{asymp:g(n):col2}, thereby completing the proof. 

From \eqref{ineq:evalu:hdn} and \eqref{ineq:evalu:hdelta}, we have 
\begin{align*}
    e_n \le c (\log n)^{1-2\beta/(2+\delta)} + c,
\end{align*}
and thus, 
\begin{align*}
    E_n &\le 2 \alpha \cdot e_n \cdot c (\log n)^{-\beta} \\
    &\le c (\log n)^{1-\beta(4+\delta)/(2+\delta)} + c (\log n)^{-\beta}. 
\end{align*}
Since $\delta > 0$ can be chosen arbitrarily small, we may let 
\[
    \frac{4\beta - 2}{1-\beta} > \delta \quad \Leftrightarrow \quad 1 - \beta \frac{4+\delta}{2+\delta} < 0.
\]
For such $\delta > 0$, it follows that 
\[
    E_n \to 0 \quad \text{as} \quad n \to \infty.
\]

Finally, it remains to verify the bound used in \eqref{evluation:logell}: 
\[
    \big|\log \ud \ell(2^i) + \log \big(1-2^{-i}\ud \ell^{-1}(2^i)\big)\big| \le c i^{1-\beta},
\]
for every $i$ sufficiently large. Note that $\ud \ell$ is a non-increasing slowly varying function, so that 
$2^{-i} \ud \ell^{-1}(2^i) \le 2^{-i(1-\vep)} < 1$ for any $\vep > 0$ and sufficiently large $i$. Consequently, the second term is bounded by a positive constant. Moreover, recall that $\ud \ell(n) = \ud h^4(n)q_0(n)$. In view of the setting in Lemma \ref{lem:inq:4th:moment}, we may take $q_0(n) = c (\log n)^{-4}$. Thus, by the definition of $\ud h$, we have 
\begin{align*}
    \big|\log \ud \ell (2^i)\big| &\le 4 \big|\log \ud h(2^i) \big| + \big| \log q_0(2^i) \big| \\
    &\le 4\alpha \sum_{j=1}^{[(1-\vep_1)i]} (\log 2^j)^{-\beta}/(1-\vep_2) + c\log i + c \\
    &\le c \sum_{j=1}^i j^{-\beta} + c\log i \\
    &\le c i^{1-\beta}. 
\end{align*}
\end{proof}

\appendix
\section{Auxiliary results}
\label{appendix}
\begin{lemma}
\label{aux:lem:ineq}
For all $x,y\ge 0$ and $a\ge 1$, the inequality
\begin{align*}
(x+y)^a \le x^a +y^a +a (2^{a-2}\vee 1) (x^{a-1}y +xy^{a-1})
\end{align*}
holds. 
\end{lemma}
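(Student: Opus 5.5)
The plan is to reduce the inequality to a one-variable statement by homogeneity and then treat the two ranges $1\le a\le 2$ and $a\ge 2$ separately. The case $y=0$ is trivial, so assume $y>0$, divide by $y^a$ and put $t=x/y\ge 0$. It then suffices to show
\[
\phi(t):=t^a+1+aC(t^{a-1}+t)-(1+t)^a\ge 0,\qquad C:=2^{a-2}\vee 1 .
\]
Both sides are symmetric under $x\leftrightarrow y$, i.e. $t\mapsto 1/t$ after multiplying by $t^a$. We may therefore restrict to $t\in[0,1]$.

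\emph{Case $1\le a\le 2$, where $C=1$.} Here I would use concavity of $s\mapsto s^{a-1}$: it gives $(1+t)^{a-1}\le 1+(a-1)t$. Writing $(1+t)^a=(1+t)(1+t)^{a-1}$ yields
\[
(1+t)^a\le 1+at+(a-1)t^2 .
\]
For $t\in[0,1]$ and $a\le 2$ we have $t^2\le t^{a-1}$, so $(a-1)t^2\le a\,t^{a-1}$, and the claim follows. The case $a=1$ is trivial.

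\emph{Case $a\ge 2$, where $C=2^{a-2}$.} I would apply the mean value theorem:
\[
(1+t)^a-1-t^a\le (1+t)^a-1=a\int_0^t(1+s)^{a-1}\,ds\le a t(1+t)^{a-1}.
\]
Convexity of $s\mapsto s^{a-2}$ (valid since $a\ge 2$) gives $(1+t)^{a-2}\le 2^{a-3}(1+t^{a-2})$ when $a\ge 3$. Then
\[
(1+t)^{a-1}=(1+t)(1+t)^{a-2}\le 2^{a-3}(1+t)(1+t^{a-2})=2^{a-3}\bigl(1+t+t^{a-2}+t^{a-1}\bigr).
\]
Multiplying by $at$, the terms $at\cdot 2^{a-3}(1+t^{a-1})$ already sit inside $a2^{a-2}(t+t^{a-1})\cdot\tfrac12$ up to the bound $t^a\le t^{a-1}$. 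The cross terms $2^{a-3}a(t^2+t^{a-1})$ are likewise bounded by $2^{a-3}a(t+t^{a-1})$ since $t\le1$. Adding up gives at most $a2^{a-2}(t+t^{a-1})$.

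For $2\le a<3$ the function $s^{a-2}$ is concave, so I would instead use $(1+t)^{a-2}\le 1+t^{a-2}$ (subadditivity). The same bookkeeping then gives the constant $a\le a2^{a-2}$.

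The main obstacle is the constant bookkeeping in the second case. I expect the crude bound via the mean value theorem to possibly overshoot by the dropped $t^a$ term. If that happens, I would keep the $-t^a$ term, or instead argue directly that $\phi(0)=0$, check $\phi'(t)\ge0$ on $[0,1]$, and verify the endpoint $\phi(1)=2+2a2^{a-2}-2^a\ge 0$, which holds because $a\ge 2$. This endpoint check shows the constant $2^{a-2}$ is essentially sharp at $x=y$.
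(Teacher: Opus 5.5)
You have a genuine gap in the range $2<a<3$; the symmetry reduction $\phi(1/t)=t^{-a}\phi(t)$, the case $1\le a\le 2$, and the case $a\ge 3$ are correct. For $a\ge 3$ the bookkeeping closes without the $-t^a$ term, since $at(1+t)^{a-1}\le a2^{a-3}(t+t^2+t^{a-1}+t^a)\le a2^{a-2}(t+t^{a-1})$.

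For $2<a<3$, subadditivity gives $at(1+t)^{a-1}\le a(t+t^2+t^{a-1}+t^a)\le 2a(t+t^{a-1})$. The constant is therefore $2a$, not $a$, and $2a\le a2^{a-2}$ holds only when $a\ge 3$. Rearranging terms cannot fix this. After your mean value step, what remains to prove is exactly $(1+t)^{a-1}\le 2^{a-2}(1+t^{a-2})$, and this holds with equality at $t=1$. Subadditivity replaces $(1+1)^{a-2}=2^{a-2}$ by $2$, so it overshoots by the factor $2^{3-a}>1$ there. Concretely, at $t=1$ your upper bound $at(1+t)(1+t^{a-2})$ equals $4a$, while the target $a2^{a-2}(t+t^{a-1})$ equals $a2^{a-1}<4a$. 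Keeping the $-t^a$ term does not rescue it in general: at $a=5/2$, $t=1$ you get $9>5\sqrt2$. Your fallback, ``check $\phi'\ge 0$ on $[0,1]$'', is only announced. That check is the whole substance of the paper's proof for $a\in(2,3]$. The paper uses convexity in the form $(1+t)^{a-1}\le 1+(a-1)(1+t)^{a-2}t\le 1+(a-1)2^{a-2}t$, together with $t^{a-2}\ge t$ on $[0,1]$, to get $\phi'\ge 0$ from $\phi(0)=0$.

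You can repair this inside your own framework. For $2<a\le 3$, set $g(t)=2^{a-2}(1+t^{a-2})-(1+t)^{a-1}$. It is concave on $[0,1]$ because $0<a-2\le 1$ and $a-1\ge 1$. Also $g(0)=2^{a-2}-1>0$ and $g(1)=0$, so $g\ge 0$ on $[0,1]$. This is exactly the sharp estimate needed after the mean value step. With that replacement your argument is a valid alternative to the paper's. The paper proves monotonicity of $f$ on $[0,1]$ for $a\in(1,3]$ and on $[1,\infty)$ for $a\ge 3$. You instead bound $(1+t)^a$ directly by tangent-line and Jensen-type estimates, which avoids differentiating $\phi$. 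Your route also shows that the only tight point is $t=1$ in the intermediate inequality $(1+t)^{a-1}\le 2^{a-2}(1+t^{a-2})$.
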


\begin{proof}
The inequality trivially holds for $x=0$, so without loss of generality we may assume $x>0$. 
Dividing both sides by $x^a$ and replacing $y/x$ with $x$, it suffices to show that
\[
(1+x)^a \le 1+x^a + a(2^{a-2}\vee 1) (x+x^{a-1}),\quad x\ge 0.
\]
We define 
\[
f(x) = 1+ x^a +a(2^{a-2}\vee 1)(x+x^{a-1}) -(1+x)^a
\]
and show that $f(x)\ge 0$ for all $x\ge 0$. Observing the symmetry property 
$f(1/x)=x^{-a}f(x)$, it is enough to consider $f$ on either $x\in [0,1]$ or $x\in [1,\infty)$. 

First, consider the case $a\in (1,3]$ and $x\in [0,1]$. Note that $f(0)=0$ and $f(1)=2+2a(2^{a-2}\vee 1)-2^a>0$. The case $a=1$ is trivial. 
We analyze the sign of the derivative
\[
f'(x) = a x^{a-1} + a(2^{a-2}\vee 1)\{1+(a-1) x^{a-2}\}-a(1+x)^{a-1}. 
\]
When $a \in (1,2]$, the triangle inequality $(1+x)^{a-1} \le 1+ x^{a-1}$ immediately yields $f'(x) \ge 0$. 
For $a \in (2,3]$, by the convexity of $(1+x)^{a-1}$, we have 
\[
(1+x)^{a-1} \le 1 +(a-1)(1+x)^{a-2} x \le 1+(a-1) 2^{a-2} x,
\]
which implies that
\[
f'(x) \ge a x^{a-1}+ a(2^{a-2}-1)+a(a-1) 2^{a-2} (x^{a-2}-x) >0.
\]
Hence, $f(x)\ge 0$ is proved for $a\in (1,3]$.

Next, suppose $a \ge 3$. We examine $f$ on $x\in [1,\infty)$, and observe that 
$f(1)= 2+a2^{a-1}-2^a>0$ and $f(\infty)=\infty$ (by applying L'H\^opital's rule to $f(1/x)=x^{-a}f(x)$). 
Moreover, $f'(x)\ge 0$ holds on $[1,\infty)$. Indeed, applying the mean value theorem to $(1+z)^{a-1}$ for $z\in [0,1]$, we obtain 
\begin{align*}
f'(x) &= a x^{a-1} \Big\{
1+2^{a-2}\big(x^{1-a}+(a-1)x^{-1}\big)-(1+x^{-1})^{a-1}
\Big\} \\
&\ge a x^{a-1} \Big\{
(a-1) \big(2^{a-2} -(1+x_\ast^{-1})^{a-2}\big)x^{-1} + 2^{a-2}x^{1-a}
\Big\} \\
& \ge a 2^{a-2}, 
\end{align*} 
where $x_\ast$ is some value in $[1,\infty)$. This completes the proof.
\end{proof}

\end{document}